\newtheorem{theorem}{Theorem}[section]
\newtheorem{introtheorem}{Theorem}
\newtheorem{lemma}[theorem]{Lemma}
\newtheorem{proposition}[theorem]{Proposition}
\theoremstyle{definition}
\newtheorem{example}[theorem]{Example}
\theoremstyle{remark}
\newtheorem{remark}[theorem]{Remark}
\newcommand{\et}{{\rm \widetilde {E}}_8}
\newcommand{\ee}{{{\rm E}_8}}
\newcommand{\eec}{{\rm E}_8^\ast}
\newcommand{\E}{\operatorname{E}}
\newcommand{\D}{\operatorname{D}}
\newcommand{\A}{\operatorname{A}}
\newcommand{\zz}{\mathbb{Z}}
\newcommand{\pp}{\mathbb{P}}
\newcommand{\qq}{\mathbb{Q}}
\newcommand{\cc}{\mathbb{C}}
\newcommand{\Pic}{\operatorname{Pic}}
\newcommand{\Cl}{\operatorname{Cl}}
\newcommand{\cl}{\operatorname{cl}}
\newcommand{\Hom}{\operatorname{Hom}}
\newcommand{\Osh}{\mathcal{O}}
\newcommand{\kxp}{{K_X}^\perp}
\newcommand{\fib}{\pi}
\newcommand{\pro}{p}
\newcommand{\gru}{\alpha}
\newcommand{\gri}{\overline\gru}
\newcommand{\ik}{i}
\numberwithin{equation}{section}
\numberwithin{table}{section}
\numberwithin{figure}{section}
\begin{document}

\title{On minimal rational elliptic surfaces}

\subjclass[2000]{Primary 14J26. 
}
\thanks{
The first author was supported by Proyecto 
FONDECYT Regular, N. 1150732. 
}
\thanks{The second author was supported 
by EPSRC grant EP/K019279/1.}
\keywords{Rational surfaces, elliptic fibrations, toric varieties}
\author{Antonio Laface}
\address{
Departamento de Matem\'atica, 
Universidad de Concepci\'on, 
Casilla 160-C,
Concepci\'on, Chile}
\email{antonio.laface@gmail.com}

\author{Damiano Testa}

\address{
Mathematics Institute, 
University of Warwick, 
Coventry, CV4 7AL, 
United Kingdom}
\email{adomani@gmail.com}

\date{\today}

\begin{abstract}
We construct $13$ projective $\qq$-factorial
Fano toric varieties and show that for any
minimal rational elliptic surface $X$ there is
one such toric variety $Z_X$ and a divisor 
class $\delta_X\in\Cl(Z_X)$ such that the number 
of $(-1)$-curves of $X$ equals the dimension of 
the Riemann-Roch space of $\delta_X$.
As an application we give the number of 
$(-1)$-curves of any such elliptic
fibration of Halphen index $2$.
\end{abstract}

\maketitle

\section*{Introduction}
Let $X$ be a smooth projective rational surface
which admits a morphism $\pi\colon X\to\pp^1$
whose general fiber is a smooth curve of genus
one. We say that $\pi$ is {\em minimal} if it does
not contract any $(-1)$-curve. 
Denote by $K_X$ the canonical divisor class on $X$;
by~\cite{BHPV}*{12.1} the map $\pi$ is
induced by a complete linear system of the
form $|{-mK_X}|$, with $m>0$. As a consequence,
the anticanonical divisor on $X$ is semiample with $(-K_X)^2=0$.
It is possible to show~\cite{DoCo} that
$\pi$ is the unique elliptic fibration on $X$,
that is $\pi$ admits a unique multiple fiber and that
this multiplicity is $m$. The number $m$
is the {\em Halphen index} of $X$.
It is well known that
the Mori cone of $X$ is finitely generated if and only
if the Cox ring is finitely generated and these two conditions
are equivalent to requiring $X$ to have a finite number
of $(-2)$-curves and $(-1)$-curves~\cite{AL,To}.
While the former
are exactly the prime components of the reducible 
fibers of $\pi$, thus easily enumerated, less
is known about the number of $(-1)$-curves of $X$.

The aim of this note is to provide a combinatorial 
approach to determine the number of such curves. 
More precisely, we  
associate to any such surface $X$ a $\qq$-factorial
projective toric variety $Z_X$ in the following way.
Let $F$ be the free 
abelian group with basis $\{f_C\}$ indexed by the 
prime components $C$ of the reducible fibers of $\pi$.
By the genus formula it follows that such components
are exactly the $(-2)$-curves of $X$.
The linear map
\[
 P_X
 \colon 
 F\to \dfrac{ \hphantom{{}^\bot} 
 {K_X}^\bot} {\langle K_X\rangle} = \ee\cong \zz^8
 \qquad {\textrm{determined by}}\qquad
 f_C\mapsto [C]
\]
defines the $1$-skeleton of the fan of a projective
toric variety $Z_X$. It is possible to show that there 
is a unique complete toric variety $Z_X$ with this 
skeleton (see Remark~\ref{fano}).
Recall that, given $E=\Hom(F,\zz)$ the dual of $F$,
the cokernel $Q_X$ of the dual map $P_X^*$ is the 
class map $Q_X\colon E\to \Cl(Z_X)$.
This allows us to define a homomorphism
\[
 \alpha\colon\Pic(X)\to\Cl(Z_X)
\]
by mapping any divisor class $[D]$ to the element
$(\Gamma\mapsto D\cdot\Gamma)$ of $E$
and then applying $Q_X$.
In what follows we will denote by $=_{\rm free}$ 
equality up to torsion.
Our main result is the following theorem.
\begin{introtheorem}
\label{main}
Let $\pi \colon X \to \mathbb{P}^1$ be a minimal
elliptic fibration on a rational surface with Halphen 
index $m$ and let $Z_X$ be the associated 
toric variety.
\begin{enumerate}
\item
\label{one}
All the $(-1)$-curves of $X$ are mapped to the 
same class $\delta_X\in\Cl(Z_X)$ by $\alpha$.
The set of $(-1)$-curves of $X$ is in bijection 
with the set of integral points of the Riemann-Roch 
polyhedron
\[
 \Delta(\delta_X)
 :=
 Q_X^{-1}(\delta_X)\cap (E_\qq)_{\geq 0}.
\]
Moreover the coordinates of an integral
point of the polyhedron $\Delta(\delta_X)$
are the intersection multiplicities of the
corresponding $(-1)$-curve with 
the prime components of the fibers of $\pi$.
\item
\label{two}
For any reducible fiber $F_i$ of $\pi$ there exists 
an element $w_i\in\Cl(Z_X)$ such that
for any prime component $C$ of $F_i$ 
\[
 Q_X(C)=_{\rm free} n_C\, w_i
\]
where $n_C$ is the positive integer which
appear as a coefficient of the root $C$ in the unique
integer relation between the roots of the affine Dynkin 
diagram corresponding to $F_i$.
\item
\label{three}
Let $F_1,\dots,F_r$ be the reducible fibers of $\pi$,
let $w_1,\dots,w_r\in\Cl(Z_X)$ be as in (2) and let
$F$ be the multiple fiber of $\pi$.
Then the following holds
\[
 \delta_X
 =_{\rm free}
 \begin{cases}
  m(w_1+\dots+w_r) & \text{ if $F$ is not in } \{ F_1 , \ldots , F_r\}, \\
  m(w_1+\dots +w_r) - (m-1)w_i & \text{ if $F=F_i$}.
 \end{cases}
\]
\end{enumerate}
\end{introtheorem}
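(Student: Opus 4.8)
The plan is to treat the three parts in order. Throughout I use that $-K_X$ is nef, that $K_X^2=0$, that the intersection form descends to a unimodular form on $\ee=\kxp/\langle K_X\rangle$, and that by adjunction a negative irreducible curve on $X$ is a $(-1)$- or a $(-2)$-curve; I also write $Q_X(C):=Q_X(f_C^*)$ for the class of the torus–invariant divisor of $Z_X$ attached to the ray $[C]=P_X(f_C)$, which is what appears in part~(2). \emph{Part~(1), first inclusion.} If $D$ is a $(-1)$-curve then $D\cdot K_X=-1$ and $D\cdot C\ge 0$ for every $(-2)$-curve $C$ (distinct irreducible curves), so $\alpha([D])=Q_X(\phi_D)$ with $\phi_D=(f_C\mapsto D\cdot C)$ having non-negative coordinates. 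If $D,D'$ are two $(-1)$-curves, then $D-D'\in\kxp$, and since every $(-2)$-curve lies in $\kxp$ as well, $f_C\mapsto (D-D')\cdot C$ is $P_X^*$ applied to $([D-D'],-)\in\Hom(\ee,\zz)$, hence $\phi_D-\phi_{D'}\in\operatorname{im}P_X^*=\ker Q_X$. So all $(-1)$-curves (which exist, since $X$ is not a minimal surface) map to a single class $\delta_X$, and $D\mapsto\phi_D$ sends them into $\Delta(\delta_X)\cap E$, the coordinates of $\phi_D$ being the intersection numbers of $D$ with the fibre components.

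\emph{Part~(1), the bijection.} Injectivity: if $\phi_D=\phi_{D'}$ then $[D-D']\in\ee$ is orthogonal to all the $[C]$, and since the rays of the complete fan of $Z_X$ span $\ee\otimes\qq$ this forces $[D-D']=0$, i.e.\ $D-D'\in\zz K_X$; then $0=(D-D')^2=-2-2\,D\cdot D'$ gives $D\cdot D'=-1$ and $D=D'$. Surjectivity: given $\phi\in\Delta(\delta_X)\cap E$, write $\phi-\phi_{D_0}=P_X^*\psi$ with $\psi\in\Hom(\ee,\zz)$, identify $\psi$ with a vector of $\ee$ (unimodularity) and lift it to $\tilde v\in\kxp$; after correcting $\tilde v$ by a multiple of $K_X$ (legitimate because $K_X^2=0$, $K_X\cdot C=0$, and $\tilde v^2$ is even by adjunction) the class $D:=D_0+\tilde v$ satisfies $D^2=D\cdot K_X=-1$ and $D\cdot C=\phi(f_C)\ge 0$ for all $(-2)$-curves $C$. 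Riemann--Roch gives $\chi(D)=1$, while $h^2(D)=h^0(K_X-D)=0$ since $(K_X-D)\cdot(-K_X)=-1<0$ and $-K_X$ is nef; hence $D$ is effective. Finally $D$ is irreducible: as $D^2<0$ some component of $D$ has negative intersection with $D$, it is a negative curve, and it cannot be a $(-2)$-curve because $D\cdot C\ge 0$, so it is a $(-1)$-curve $E$; then $D-E$ is effective and vertical ($(D-E)\cdot(-K_X)=0$), $(D-E)^2=-2-2\,D\cdot E\ge 0$ together with the non-positivity of squares of vertical divisors forces $(D-E)^2=0$ and $D\cdot E=-1$, and as $(D-E)\cdot E=D\cdot E-E^2=0$ while a non-zero effective vertical divisor of square $0$ contains a whole fibre, which meets the horizontal curve $E$, we get $D-E=0$; thus $D=E$ is a $(-1)$-curve with $\phi_E=\phi$.

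\emph{Parts~(2) and (3).} Writing $F_i=\sum_{C\in F_i}n_C C$ (so $F_i$ has $\ell_i+1$ components), one has $F_i\sim -mK_X$ when $F_i$ is not the multiple fibre and $mF_i\sim -mK_X$ when it is; in both cases the class of $F_i$ in the torsion-free lattice $\ee$ vanishes, so $\sum_{C\in F_i}n_C[C]=0$ is the unique relation among the simple roots $[C]$, $C\in F_i$, of the affine root system attached to $F_i$. Since components of distinct fibres are orthogonal in $\ee$, the $[C]$ with $C\in F_i$ span an $\ell_i$-dimensional subspace and these subspaces are mutually orthogonal, so $\operatorname{rank}\operatorname{im}P_X=\sum_i\ell_i$, whence $\operatorname{rank}\ker P_X=r$ and $\ker P_X$ is spanned over $\qq$ by the independent vectors $\rho_i:=\sum_{C\in F_i}n_C f_C$. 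Dualizing the pairing $E\times F\to\zz$ identifies $\Cl(Z_X)\otimes\qq$ with $(\ker P_X)^*\otimes\qq$, under which $Q_X(f_C^*)$ becomes the functional ``coefficient of $f_C$''; hence $Q_X(f_C^*)$ pairs to $n_C$ with $\rho_i$ and to $0$ with the other $\rho_j$, so taking $w_i:=Q_X(f_{C_0}^*)$ for a component $C_0\in F_i$ with $n_{C_0}=1$ (which exists in every affine Dynkin diagram) yields $Q_X(C)=_{\rm free}n_C w_i$ for all $C\in F_i$, proving~(2). For~(3), with a $(-1)$-curve $D_0$ we get $\delta_X=Q_X(\phi_{D_0})=\sum_C(D_0\cdot C)Q_X(f_C^*)=_{\rm free}\sum_i(D_0\cdot F_i)w_i$, and $D_0\cdot F_i$ equals $-m\,D_0\cdot K_X=m$ when $F_i\sim -mK_X$ and equals $1$ when $mF_i\sim -mK_X$; substituting gives the two displayed formulas.

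\emph{Main obstacle.} The technical heart is the surjectivity in part~(1): turning a lattice point into a genuine $(-1)$-curve. The numerical construction of $D$ and its effectivity are routine Riemann--Roch, but proving $D$ irreducible is where the geometry really enters — the inequality $\phi(f_C)\ge 0$ is used precisely to exclude $(-2)$-curve components, and the fact that no self-intersection-zero vertical divisor can avoid a horizontal curve excludes fibre components. Granting this, parts~(2) and~(3) are essentially formal, resting only on the rank identity $\operatorname{rank}\operatorname{im}P_X=\sum_i\ell_i$ and on the computation of $D_0\cdot F_i$.
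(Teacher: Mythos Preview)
Your proof is correct and follows essentially the same route as the paper: part~(1) is the paper's bijection lemma, with surjectivity reduced to showing that a numerical $(-1)$-class having non-negative intersection with every $(-2)$-curve is a genuine $(-1)$-curve (the paper isolates this as Lemma~\ref{lem:m1}); parts~(2)--(3) unpack the free part of $Q_X$ in terms of the fibre relations $\sum_{C}n_C[C]=0$, which you phrase via the duality $\Cl(Z_X)\otimes\qq\cong(\ker P_X)^*\otimes\qq$ and the paper phrases via the explicit block matrix~\eqref{Q-mat}. The only real variation is in the irreducibility step: the paper decomposes $D=\Gamma+D_1$ with $\Gamma$ a $(-1)$-curve and $D_1$ a non-negative sum of $(-2)$-curves, then squeezes $0\le D\cdot D_1=-\Gamma\cdot D_1\le 0$ and applies Hodge index to $D_1$; you instead pick a component $E$ with $D\cdot E<0$, squeeze $(D-E)^2$ between $0$ (from $D\cdot E\le -1$) and $0$ (from negative-definiteness of $\ee$), and finish with $(D-E)\cdot E=0$. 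Both are valid and morally the same. One phrasing to tighten: ``a non-zero effective vertical divisor of square $0$ contains a whole fibre'' is not literally what you need or use---the correct statement is that such a class is a positive integer multiple of $-K_X$, hence pairs positively with the horizontal curve $E$, contradicting $(D-E)\cdot E=0$.
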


The paper is organised as follows. In Section~\ref{sec:1}
we prove Theorem~\ref{main}. Section~\ref{sec:2}
is devoted to rational elliptic surfaces of Halphen 
index two and the enumeration of their $(-1)$-curves
as an application of Theorem~\ref{main}.
Finally in the Appendix we list the degree matrices
of the thirteen $\qq$-factorial Fano toric varieties
related to root lattices and which appear in the 
theory exposed in this note.

\section{$(-1)$-curves and lattice points}
\label{sec:1}

Let $\pi\colon X\to\pp^1$ be a minimal elliptic fibration
on a smooth projective rational surface.
Denote by $\ee$ the lattice $\kxp / \langle K_X\rangle$ and by 
$\pro \colon \kxp \to \ee$ the canonical quotient map; as the 
notation suggests, the lattice $\ee$ is isometric to the negative definite 
$\ee$-lattice.  
We let $F$ be the free abelian group with basis indexed 
by the $(-2)$-curves on $X$ and we let $\cl_X \colon F \to \kxp$ 
be the homomorphism assigning to each basis vector 
the corresponding $(-2)$-class; we denote by $P_X$ the 
composition $\pro \circ \cl_X$.  The images of the rays 
of the standard basis of $F$ under the homomorphism 
$P_X$ define a one-dimensional fan in $\ee$. 
We will show in Remark~\ref{fano}
that there exists a unique projective $\qq$-factorial
toric variety $Z_X$ whose one-dimensional
cones are the cones of the fan defined by $P_X$.
We summarise the situation in the following 
commutative diagram with exact rows
\begin{equation} \label{diag:fan}
\begin{array}{c}
\xymatrix{
 F\ar@{=}[d]\ar[r]^-{\cl_X} & \kxp \ar[d]^-\pro\ar[r]^-\tau & A\ar[r]\ar[d] & 0 \\
 F \ar[r]^{P_X} & \ee\ar[r]^-{\bar\tau} & A/\langle h\rangle\ar[r] & 0 
}
\end{array}
\end{equation}
where $A$ is the cokernel of $\cl_X$ and
$h:=\tau(K_X)$.
\begin{remark}
The image of the homomorphism $P_X$ is
a root sublattice $\Lambda$ of $\ee$. 
According to~\cite{OgSh}*{Thm. 3.2 and Thm. 3.3}
there is  a unique isometric embedding of $\Lambda$
into $\ee$ modulo the action of the Weyl group
of $\ee$ unless $\Lambda$ is one of the following
lattices: $A_7$, $2A_3$, $A_5+A_1$, $A_3+2A_1$, $4A_1$.
In particular, if the rank of $\Lambda$ is eight, then
the embedding is unique.
\end{remark}

Let $E=\Hom(F,\zz)$ be the dual of $F$ 
and let $Q_X \colon E \to \Cl(Z_X)$ be the 
cokernel of the dual map $P_X^* \colon \eec \to E$.
Dualising the exact sequences in~\eqref{diag:fan} 
we obtain the following commutative diagram with 
exact rows
\begin{equation} \label{diag:cl}
\begin{array}{c}
\xymatrix{
 (\kxp)^*\ar[r]^-{\cl_X^*} & E \ar[r]\ar@{=}[d] 
 & \dfrac{\Cl(Z_X)}{\langle\delta_X\rangle}\ar[r] & 0\\
(\ee)^* \ar[r]^-{P_X^*}\ar[u]^-{\pro^*} & E\ar[r]^-{Q_X}
& \Cl(Z_X)\ar[r]\ar[u]^-\tau & 0 
}
\end{array}
\end{equation}
where $\delta_X\in\Cl(Z_X)$ generates the kernel 
of $\tau$ and is defined by
\[
 \delta_X
 =
 (Q_X\circ\cl_X^*)(m),
\] 
where $m\in (\kxp)^*$ generates the cokernel
of $p^*$, that is $m$ is any projection of the 
homomorphism $i\colon\zz\to\kxp$ defined by 
$1\mapsto -K_X$.
Observe that $\delta_X$ does not depend
on the choice of $m$.

Given a class $w\in\Cl(Z_X)$ we recall that the
{\em Riemann-Roch polytope} of $w$ is
the convex polytope
\[
 \Delta(w) := Q_Z^{-1}(w)\cap (E_\qq)_{\geq 0},
\]
where $(E_\qq)_{\geq 0}$
is the positive orthant of the rational vector
space $E_\qq = E\otimes_\zz\qq$.
Assume now that $D$ is a $(-1)$-curve of
$X$ and denote by $e_D$ the linear function
$F\to\zz$ defined by $f\mapsto \cl_X(f)\cdot D$,
where the product is induced by the intersection
pairing of $X$.
We therefore obtain a function 
\[
 \varepsilon \colon \bigl\{ (-1){\textrm{-curves of }} X \bigr\} \longrightarrow E
 \qquad
 D\longmapsto e_D.
\]

The following lemma is needed in the proof of 
Lemma~\ref{mlemma} (see also~\cite[Proposition~3.3]{LH} 
for another proof).

\begin{lemma}
\label{lem:m1}
Let $\fib \colon X \to \mathbb{P}^1$ be a minimal
elliptic fibration on a  rational surface and let $D$ 
be a divisor on $X$ such that $D^2=D\cdot K_X=-1$
and $D\cdot C\geq 0$ for any $(-2)$-curve of $X$. 
Then $D$ is linearly equivalent to a $(-1)$-curve.
\end{lemma}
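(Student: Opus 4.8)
The plan is to prove the statement in two stages: first show that $D$ is linearly equivalent to a nonzero effective divisor, and then analyse the irreducible components of such a divisor by intersection-theoretic bookkeeping, using the adjunction formula and the negative semidefiniteness of $\kxp$, to conclude that a single component survives and that it is a $(-1)$-curve. The numerical hypotheses are chosen exactly so that this bookkeeping closes up.

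For effectivity, first I would apply Riemann--Roch on the rational surface $X$ (so $\chi(\Osh_X)=1$): since $D^2=D\cdot K_X=-1$ one gets $\chi(\Osh_X(D))=1+\tfrac12(D^2-D\cdot K_X)=1$. By Serre duality $h^2(\Osh_X(D))=h^0(\Osh_X(K_X-D))$, and $(K_X-D)\cdot(-K_X)=-K_X^2+D\cdot K_X=-1<0$; since $-K_X$ is semiample, hence nef, the class $K_X-D$ is not effective, so this $h^2$ vanishes and $h^0(\Osh_X(D))\geq 1$. As $D^2=-1$, the class $D$ is not numerically trivial, so $D$ is linearly equivalent to a nonzero effective divisor $D_0=\sum_i a_i C_i$ with all $a_i>0$ and the $C_i$ distinct integral curves.

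Next I would isolate one component. As $-K_X$ is nef, $C_i\cdot(-K_X)\geq 0$ for every $i$ while $\sum_i a_i(C_i\cdot(-K_X))=D\cdot(-K_X)=1$, so there is exactly one component, say $C_1$, with $C_1\cdot(-K_X)=1$ and $a_1=1$, whereas $C_i\cdot K_X=0$ for every $i\geq 2$. Set $V:=D_0-C_1=\sum_{i\geq 2}a_iC_i$; this is an effective divisor supported on curves lying in $\kxp$. Recall that $\kxp$ is negative semidefinite with radical $\qq K_X$, because $K_X^2=0$ and $\kxp/\langle K_X\rangle\cong\ee$ is negative definite; hence each $C_i$ with $i\geq 2$ satisfies $C_i^2\in\{0,-2\}$, and $C_i^2=0$ forces $C_i\in\qq K_X$, in fact a negative rational multiple of $K_X$ (since $C_i$ is effective and $-K_X$ is effective and nonzero), so that $D\cdot C_i>0$ for such a component because $D\cdot K_X=-1$. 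Now the adjunction formula and $C_1\cdot K_X=-1$ give $C_1^2=2p_a(C_1)-1\geq -1$, and since $C_1\cdot V\geq 0$ we obtain $D\cdot C_1=C_1^2+C_1\cdot V\geq -1$. On the other hand $D\cdot V=\sum_{i\geq 2}a_i(D\cdot C_i)\geq 0$: for a $(-2)$-curve component this is the hypothesis $D\cdot C_i\geq 0$, and for a component in $\qq K_X$ we just saw $D\cdot C_i>0$. Since $-1=D^2=D\cdot C_1+D\cdot V$, all these inequalities are equalities: $D\cdot C_1=-1$, forcing $C_1^2=-1$ (so $p_a(C_1)=0$ and $C_1$ is a $(-1)$-curve) and $C_1\cdot V=0$; and $D\cdot V=0$, forcing $V^2=(D_0-C_1)\cdot V=D\cdot V-C_1\cdot V=0$. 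Being isotropic in the negative semidefinite lattice $\kxp$, $V$ lies in the radical $\qq K_X$, so $V\equiv\mu K_X$; then $0=C_1\cdot V=-\mu$, hence $V\equiv 0$, and an effective divisor that is numerically trivial must be zero. Therefore $D\sim D_0=C_1$ is a $(-1)$-curve.

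The only delicate point, and the step I expect to require the most care, is excluding from $D_0$ the components other than $C_1$ that are not $(-2)$-curves: one has to note that the negative semidefiniteness of $\kxp$ forces such a component to be a rational multiple of $K_X$ — a fibre-type curve — and that $D$ meets such a curve strictly positively, so that it cannot appear once $D\cdot V=0$ is known. Everything else is routine Riemann--Roch and adjunction, and the hypothesis $D\cdot C\geq 0$ on the $(-2)$-curves enters exactly once, to guarantee $D\cdot V\geq 0$.
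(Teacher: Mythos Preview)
Your proof is correct and follows essentially the same strategy as the paper's: use Riemann--Roch and nefness of $-K_X$ to get effectivity, split off the unique component $C_1$ with $C_1\cdot(-K_X)=1$, and then use intersection bookkeeping together with the negative semidefiniteness of $K_X^\perp$ (equivalently, the Hodge index theorem) to force the remainder $V$ to be a multiple of $K_X$ and hence zero. If anything, your write-up is slightly more careful than the paper's, since you do not assume at the outset that $C_1$ is already a $(-1)$-curve or that the remaining components are all $(-2)$-curves, instead allowing for and disposing of possible fibre-type components explicitly.
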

\begin{proof}
Since the elliptic fibration $\pi$ is minimal, the 
anticanonical divisor $-K_X$ is linearly equivalent to a 
multiple of a fiber and thus it is nef.  
By the Riemann-Roch theorem either $D$ or ${K_X-D}$ 
is linearly equivalent to an effective divisor.
The equality $-K_X \cdot (K_X-D) = -1$ and the fact that 
the divisor $-K_X$ is nef allow us to conclude that 
the first possibility occurs. Thus, without loss 
of generality, we can assume $D$ to be effective.

By the genus formula and the fact that $-K_X$ is nef, 
the only  curves of $X$ which are orthogonal
to $-K_X$ are $(-2)$-curves, while the only ones
which have intersection $1$ with $-K_X$ are the
$(-1)$-curves. Thus the equality $-K_X\cdot D=1$ 
implies $D = \Gamma + D_1$, where $\Gamma$
is a $(-1)$-curve and $D_1$ is a non-negative sum 
of $(-2)$-curves. From $(\Gamma+D_1)^2=D^2=-1$
we deduce the following
\[
 0\leq (D_1+\Gamma)\cdot D_1= -\Gamma\cdot D_1\leq 0,
\]
where the first inequality is due to the hypothesis
on $D=(D_1+\Gamma)$ and the definition of $D_1$, 
while the second inequality is due to the fact that
$\Gamma$ is not a component of $D_1$.
Thus $(D_1)^2=\Gamma\cdot D_1=0$ and,
since $-K_X\cdot D_1=0$, by the Hodge index theorem
we conclude that $D_1$ is linearly equivalent to 
$-\alpha K_X$ for some rational number $\alpha$. 
Using the identities $-1=D^2=(\Gamma-\alpha K_X)^2=-1+2\alpha$
we deduce that $\alpha=0$ and thus $D=\Gamma$ is
a $(-1)$-curve.
\end{proof}

\begin{lemma}\label{mlemma}
The function $\varepsilon$ induces a bijection 
between the set of $(-1)$-curves on $X$ and 
the set of integral points of the polytope
$\Delta(\delta_X)$.
\end{lemma}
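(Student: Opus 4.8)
The plan is to verify that $\varepsilon$ is well-defined into $\Delta(\delta_X)$ and then construct an inverse. First I would check that for a $(-1)$-curve $D$ the linear function $e_D \in E$ actually lies in the Riemann--Roch polytope. Nonnegativity is immediate: the coordinates of $e_D$ are the intersection numbers $C \cdot D$ with the $(-2)$-curves $C$, and since $D$ is irreducible and distinct from every such $C$ (as $D^2 = -1 \ne -2$), each coordinate is $\geq 0$, so $e_D \in (E_\qq)_{\geq 0}$. For the fiber condition $Q_X(e_D) = \delta_X$, I would trace through diagram~\eqref{diag:cl}: by definition $e_D = \cl_X^*(d)$ where $d \in (\kxp)^*$ is the functional $\Gamma \mapsto D \cdot \Gamma$ on $\kxp$, and $\delta_X = (Q_X \circ \cl_X^*)(m)$ where $m$ is any preimage under $i^*$ of a generator of $\operatorname{coker}(p^*)$. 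Since $D \cdot K_X = -1$, the restriction of $d$ to $\kxp$ pairs with $i(1) = -K_X$ to give $-D \cdot K_X = 1$, hence $i^*(d)$ generates $\zz = \operatorname{coker}(p^*)$'s source appropriately; more precisely $d$ and $m$ differ by an element of $\operatorname{im}(p^*)$, and since $Q_X \circ \cl_X^* \circ p^* = Q_X \circ P_X^* = 0$ by the exactness of the bottom row of~\eqref{diag:cl}, we get $Q_X(e_D) = Q_X(\cl_X^*(d)) = Q_X(\cl_X^*(m)) = \delta_X$. So $\varepsilon$ maps into the integral points of $\Delta(\delta_X)$.

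Next I would construct the inverse map. Given an integral point $v \in \Delta(\delta_X) \cap E$, the condition $Q_X(v) = \delta_X$ together with the exactness of the bottom row of~\eqref{diag:cl} and the computation above shows $v - \cl_X^*(m) \in \operatorname{im}(P_X^*) = \operatorname{im}(\cl_X^* \circ p^*)$, so $v = \cl_X^*(m')$ for some $m' \in (\kxp)^*$ with $p^*(\text{something}) $ adjusting $m$ to $m'$; in particular $v$ is pulled back from a functional on $\kxp$. Using that $X$ is rational, the intersection form makes $\Pic(X) \cong \kxp \oplus \zz K_X$ after tensoring with $\qq$ unimodular, so there is a divisor class $D$ on $X$ (well-defined up to $\langle K_X \rangle$, and then pinned down by requiring $D^2 = -1$, which fixes the $K_X$-component since $-K_X \cdot K_X = 0$ forces a linear-in-$\alpha$ equation exactly as in the last line of the proof of Lemma~\ref{lem:m1}) such that $D \cdot \Gamma = v$ evaluated appropriately for all $(-2)$-curves $\Gamma$, $D \cdot K_X = -1$, and $D^2 = -1$. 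The nonnegativity $v \geq 0$ gives $D \cdot C \geq 0$ for every $(-2)$-curve $C$. Then Lemma~\ref{lem:m1} applies verbatim and yields that $D$ is linearly equivalent to an honest $(-1)$-curve.

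Finally I would check the two maps are mutually inverse. By construction, starting from $v$ we produced $D$ with $e_D = v$, so $\varepsilon$ is surjective onto the integral points. For injectivity: if $D_1, D_2$ are $(-1)$-curves with $e_{D_1} = e_{D_2}$, then $D_1 - D_2$ is orthogonal to every $(-2)$-curve and to $K_X$ (since both have $K_X$-degree $-1$), hence by the Hodge index theorem argument — the orthogonal complement in $\kxp$ of the span of the $(-2)$-curves, restricted to where the square is nonpositive — combined with $(D_1 - D_2)^2 = -1 - 1 - 2 D_1 \cdot D_2 = -2(1 + D_1 \cdot D_2) \leq 0$ and $-K_X \cdot (D_1 - D_2) = 0$, forces $D_1 - D_2 \equiv \beta K_X$ for some $\beta \in \qq$; intersecting with $K_X$ (degree $0$, consistent) does not immediately pin $\beta$, but using $D_1^2 = D_2^2 = -1$ gives $0 = D_1^2 - D_2^2 = (D_1 - D_2)\cdot(D_1 + D_2) = \beta K_X \cdot (D_1 + D_2) = \beta(-1-1) = -2\beta$, so $\beta = 0$ and $D_1 \sim D_2$, hence $D_1 = D_2$ as both are irreducible curves in the same class. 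The main obstacle I anticipate is making the diagram-chase in~\eqref{diag:cl} fully rigorous over $\zz$ rather than $\qq$ — in particular verifying that an integral point of $\Delta(\delta_X)$ really lifts to an \emph{integral} divisor class $D$ with the right intersection profile, which relies on the precise description of $\delta_X$ as $(Q_X \circ \cl_X^*)(m)$ and on the torsion-freeness statements implicit in the construction; the geometric input (Lemma~\ref{lem:m1} and the Hodge index arguments) is then routine.
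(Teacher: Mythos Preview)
Your strategy matches the paper's: show $\varepsilon$ lands in $\Delta(\delta_X)$, prove injectivity by arguing $D_1-D_2\in\langle K_X\rangle$, and for surjectivity lift an integral point to a class meeting the hypotheses of Lemma~\ref{lem:m1}. Two small repairs are needed.

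First, the decomposition $\Pic(X)\cong \kxp\oplus\zz K_X$ that you invoke is false: since $K_X^2=0$ we have $K_X\in \kxp$, so no such splitting exists even over~$\qq$. The paper handles the lift by using instead the perfect pairing
\[
\kxp\times \frac{\Pic(X)}{\langle K_X\rangle} \longrightarrow \zz,
\]
which is perfect because the intersection form on $\Pic(X)$ is unimodular and $K_X$ is primitive; this is what produces an integral class $[D]\bmod\langle K_X\rangle$ with the prescribed intersection numbers, and it resolves exactly the integrality worry you flag at the end.

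Second, once $D$ is known only modulo $K_X$, you want an integer $r$ with $(D+rK_X)^2=D^2-2r=-1$, and this has an integer solution precisely when $D^2$ is odd. You do not check this; the paper notes it follows from adjunction, since $D^2+D\cdot K_X$ is even and $D\cdot K_X=-1$.

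Finally, your injectivity argument (like the paper's) tacitly uses that the $(-2)$-curves span a finite-index sublattice of $\kxp/\langle K_X\rangle$ in order to conclude $D_1-D_2\in\langle K_X\rangle$. The Hodge index theorem alone cannot give this, since you have already computed $(D_1-D_2)^2=-2-2D_1\cdot D_2\le -2<0$; what is actually doing the work is orthogonality to a rank-$8$ sublattice of $\ee$.
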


\begin{proof}
We begin by showing that the image of $\varepsilon$ is 
contained in $\Delta(\delta_X)$. Indeed, if $D$ is a
$(-1)$-curve of $X$ the homomorphism 
$e_D\colon F\to\zz$ factors through  $\kxp$
by definition and it defines a projection
$m_D\colon \kxp\to\zz$ of $i\colon\zz\to\kxp$
since $-K_X\cdot D = 1$.
Thus $Q_X(e_D)=\delta_X$ by our definition
of $\delta_X$. Moreover,
for each element $f$ of the standard basis 
of $F$, we have
\[
 e_D(f) = \cl_X(f) \cdot C \geq 0,
\]
where the last inequality is due to the fact that 
$\cl_X(f)$ and $D$ correspond to distinct 
irreducible curves on $X$.  We deduce that the 
image of $\varepsilon$ is contained in 
$\Delta(\delta_X)$.

Now, we show that $\varepsilon$ is injective.  
Let $D,D'$ be $(-1)$-curves on $X$ and suppose 
that $e_D=e_{D'}$ holds.  
It follows that the difference $D-D'$ is proportional 
to $K_X$.  We deduce that 
$0 = (D-D')^2 = -2 - 2 D \cdot D'$, so that 
$D$ and $D'$ coincide.

Finally, let $e$ be an integral point of $\Delta(\delta_X)$.  
Therefore, we write $e = e_D + P_X^*(\gamma)$, 
where $D$ is a $(-1)$-curve on $X$ 
and $\gamma$ is an element of $\eec$.  
We deduce that $e$ factors through $\cl_X$ 
inducing a retraction $m_e \colon \kxp \to \mathbb{Z}$ 
of $i$.  Using the perfect pairing 
\[
 \xymatrix{
  \kxp \times \dfrac{\Pic(X)}{\langle-K_X\rangle}
  \ar[r] &
  \mathbb{Z},
 }
\]
we see that $m_e$ is multiplication by an element
$[D]+\mathbb{Z}(-K_X)$, where $D \cdot (-K_X)=1$
and for each $(-2)$-curve $\Gamma$ on $X$ the 
inequality $D \cdot \Gamma \geq 0$ holds.
Let $r$ be an integer such that $(D+r K_X)^2 = D^2 - 2r = -1$; 
such an integer $r$ exists since $D^2$ is odd by the
adjunction formula.  By Lemma~\ref{lem:m1}, 
the class of $D+rK_X$ is the class of a 
$(-1)$-curve and we are done.
\end{proof}

We now make explicit the homomorphism 
$Q \colon E \to \Cl(Z)$.
We begin with a sublattice $\Lambda$ of $\ee$ 
spanned by roots of $\ee$.
Let $\Lambda = \Lambda_1 \oplus \cdots \oplus \Lambda_r$ 
be a decomposition of $\Lambda$ as a direct sum of irreducible 
root lattices.
We fix a subset $\{\lambda_1 , \ldots , \lambda_s\}$
of the lattice $\Lambda$ consisting of simple roots
chosen as follows.
First of all we choose a basis of $\Lambda_1$ 
indecomposable roots together with the root corresponding 
to the affine vertex in the extended Dynkin diagram of 
$\Lambda_1$, equivalently, the last root is the opposite 
of the longest root of $\Lambda_1$. Then we proceed
similarly with $\Lambda_2$ and so on.
Let $F$ be the free abelian group of rank $s$ 
with basis $f_1 , \ldots , f_{s}$.  
Let $P \colon F \to \ee$ be the homomorphism 
defined by $f_1 \mapsto \lambda_1 , \ldots , f_{s} \mapsto \lambda_{s}$ 
and let $E=\Hom(F,\zz)$ be the dual lattice of $F$. 
The homomorphism $P$ defines a toric variety 
$Z$ and an exact sequence 
\[
 \xymatrix@1{
   \eec\ar[r]^-{P*} & E\ar[r]^-Q & \Cl(Z)\ar[r] & 0.
  } 
\]
as in~\eqref{diag:cl}. 
Let $\fib \colon X \to \mathbb{P}^1$ be a  
minimal rational elliptic surface with singular fibers 
of type $\widetilde{\Lambda}_1 , \ldots , \widetilde{\Lambda}_r$.  
We denote the $(-2)$-curves of $X$ by 
$C_1 , \ldots , C_{s}$ where the numbering is 
compatible with the numbering of the roots 
$\lambda_1 , \ldots , \lambda_{s}$ given above.
It follows from the definitions that the homomorphisms 
$P$ and $P_X$ coincide, and thus that also 
$Q$ and $Q_X$ coincide.
With the above notation the matrix $Q$
has the following form
\begin{equation}
\label{Q-mat}
 Q
 =
 \left[
  \begin{array}{cccc}
   Q_1 & 0 & \cdots & 0\\
   0 & Q_2 & \cdots & 0\\
   \vdots & \ddots & \ddots & \vdots\\
   0 & \cdots & 0 & Q_r\\
   \hline
   \multicolumn{4}{c}{\text{torsion}}
  \end{array}
 \right]
\end{equation}
where each $Q_i$ is the row vector of length
${\rm rk} \, \Lambda_i+1$ defined
by the unique relation between the simple roots 
of the chosen basis of $\Lambda_i$
together with the root corresponding to its
affine vertex.
In these coordinates the matrix $Q$ defines
a map $E\to\zz^r\oplus (\ee/\Lambda)$.

\begin{proof}[Proof of Theorem~\ref{main}]
The first part of~\eqref{one} follows from Lemma~\ref{mlemma}.
For the second part observe that a $(-1)$-curve $D$
gives an element $e_D\in E$ by taking intersection
with $D$. Thus the statement follows from
the fact that the canonical basis of $F$
consists of the prime components of the 
fibers of $\pi$.

To prove items~\eqref{two} and~\eqref{three}
we begin by recalling that the class $\delta_X\in\Cl(Z_X)$ 
is defined by any
projection $m\colon\kxp\to\zz$ of the inclusion 
map $i\colon \zz\to\kxp$ given by $1\mapsto -K_X$.
In particular such a projection is obtained by
intersecting with a $(-1)$-curve $D$ of $X$,
that is $m(C)=C\cdot D$ for any $C\in\kxp$.
Due to the above presentation of the matrix $Q$
the integers in the row corresponding to the vector $Q_i$ are
exactly the multiplicities of the unique primitive
vector of self-intersection zero in the root
lattice $\widetilde\Lambda_i\subset\kxp$
whose class is thus the class of $-mK_X$
whenever the corresponding $(-2)$-curves
form a fiber of $\fib$ or it is $-K_X$ whenever
these curves form the support of the non-reduced
fiber. Thus items~\eqref{two} and~\eqref{three} follow.
\end{proof}

\begin{remark}\label{fano}
Due to~\eqref{Q-mat} the free part of the
grading of any variable of the Cox ring
of the toric variety $Z$ is one of the canonical
vectors $w_1,\dots,w_r$ of $\zz^r$ and any 
such vector occurs as a degree of at least 
two variables. As a consequence, by
~\cite[Proposition~2.4.2.6]{ADHL}, there is 
a unique projective variety $Z$ with the
given Cox ring and such that
\[
 {\rm SAmple}(Z) = {\rm Mov}(Z) = {\rm Eff}(Z)
 = {\rm cone}(w_1,\dots,w_r).
\]
It follows that any class in the relative interior 
of the above cone defines the same $\qq$-factorial
projective toric variety $Z$. This is the case for
the anticanonical class $-K_Z$ so that
the variety $Z$ is Fano.
\end{remark}

\begin{remark}
Recall~\cite{MiSt}*{Lemma~8.16} that if $Z$ 
is a toric variety with Cox ring $\mathcal{R}(Z)$
and grading matrix $Q:=[w_1,\dots,w_n]$, then 
the multigraded Hilbert series of
$\mathcal{R}(Z)$ is
\[
 H(\mathcal{R}(Z),t) 
 =
 \frac{1}{(1-t^{w_1})\cdots(1-t^{w_n})},
\]
where $t^w$ is the character of the quasi-torus
${\rm Spec}(\cc[\Cl(Z)])$ corresponding to $w\in\Cl(Z)$.
Hence Table~\ref{gradings} and 
Theorem~\ref{main}
allow us to determine the number of $(-1)$-curves
on any such surface.
\end{remark}

\begin{example}\label{ex:D8}
Let $X$ be a  minimal rational elliptic surface of type
$D_8$ of Halphen index $2$ and whose multiple
fiber is irreducible. According to the table appearing in
the Appendix, the Hilbert series of the Cox ring 
of the corresponding toric variety $Z$ is
\[
 \frac{1}{(1-t)^2(1-tu)^2(1-t^2)^3(1-t^2u)^2}
\]
where the two variables are in $\cc[t,u]/\langle u^2-1\rangle$.
Expanding the above series, a direct calculation
shows that the coefficient of the term $t^2$ is $9$,
while that of $t^{2}u$ is $6$.
Both cases are realized as shown in the next
paragraphs.

{\em Case 1}.
Choose nine points $p_1,\dots,p_9$ on a smooth
plane cubic curve $C$ with flex at $p_1$
such that the classes of $p_2-p_1,\dots,p_9-p_1$
are the following elements of $\zz/2\zz\oplus\zz/2\zz$: 
$00,00,01,01,01,10,10,11,11$. For example we can take
$C:=V(x^3 - y^2z - xz^2)$,
$p_1=( 0, 1, 0 )$, $p_3=(-1, 0, 1)$, $p_6
= (0,0,1)$, $p_8=(1,0,1)$ and the remaining points
to be infinitely near to the given ones according
to the classes $p_i-p_1$ written above.
The corresponding pencil of sextics curves 
is generated by $(x^3 - y^2z - xz^2)^2$ and
$x^2 y^2 (x - z)(x + z)$.
The classes of the $(-1)$-curves of $X$ are 
the rows of the following matrix.

{\footnotesize
\[
 \left[
 \arraycolsep=3pt
 \begin{array}{rrrrrrrrrr}
  0&0&1&0&0&0&0&0&0&0\\
  0&0&0&0&0&1&0&0&0&0\\
  0&0&0&0&0&0&0&1&0&0\\
  0&0&0&0&0&0&0&0&0&1\\
  1&-1&-1&0&0&0&0&0&0&0\\
  2&0&0&-1&-1&-1&-1&-1&0&0\\
  2&0&0&-1&-1&-1&0&0&-1&-1\\
  4&-2&-2&-1&-1&-1&-2&0&-1&-1\\
  4&-2&-2&-1&-1&-1&-1&-1&-2&0
 \end{array}
 \right]
\]
}

The first column is the degree of the plane model, 
while the remaining columns are the negative of the 
multiplicities at the nine points. For example the first 
row is an exceptional divisor while the last row is a 
plane quartic with three double points and through 
other five simple points. \\

{\em Case 2}.
Choose nine points $p_1,\dots,p_9$ on a smooth
plane cubic curve $C$ with flex at $p_1$
such that the classes of $p_2-p_1,\dots,p_9-p_1$
are the following elements of $\zz/4\zz$: 
$0,0,2,2,2,1,1,3,3$. For example we can take
$C:=V(y^2z + xyz + yz^2 - x^3 - x^2z)$,
$p_1=( 0, 1, 0 )$, $p_3=(-1, 0, 1)$, $p_6
= (0,0,1)$, $p_8=(0,-1,1)$ and the remaining points
to be infinitely near to the given ones according
to the classes $p_i-p_1$ written above.
The corresponding pencil of sextics curves 
is generated by $(y^2z + xyz + yz^2 - x^3 - x^2z)^2$
and $x^2(x+z)^2y(x+y+z)$. The classes of the
$(-1)$-curves of $X$ are the rows of the following matrix.

{\footnotesize
\[
 \left[
 \arraycolsep=3pt
 \begin{array}{rrrrrrrrrr}
  0&0&1&0&0&0&0&\hphantom{-}0&0&\hphantom{-}0\\
  0&0&0&0&0&1&0&0&0&0\\
  0&0&0&0&0&0&0&1&0&0\\
  0&0&0&0&0&0&0&0&0&1\\
  1&-1&-1&0&0&0&0&0&0&0\\
  2&0&0&-1&-1&-1&-1&0&-1&0\\
 \end{array}
 \right]
\]
}

\end{example}

\section{Rational elliptic surfaces}
\label{sec:2}

Let $X$ be a smooth projective rational
surface with $-K_X$ nef and $(K_X)^2=0$.
Given a curve $C$ in the linear system $|{-K_X}|$
we can construct the homomorphism of groups
\[
 \gru \colon \kxp \to {\rm Pic}^0(C)
 \qquad
 w\mapsto\imath^* w,
\]
where $\imath\colon C\to X$ is the inclusion.
We denote by $G$ the image of $\gru$
and by $\ik\colon\zz\to\kxp$ the
homomorphism defined by $1\mapsto -K_X$
and we let $H$ be the subgroup of $G$
generated by the images via $\gru$ of the 
classes of all the irreducible components of $C$.
Let $\kappa\colon\zz\to H$ be the homomorphism
defined by $1\mapsto h=\gru(-K_X)$.
Using the isomorphism between the lattices
$\ee$ and $\kxp / \zz(-K_X)$ we define
$\pro\colon\kxp\to\ee$ to be the corresponding projection.
We summarize these definitions in the 
commutative diagram
\begin{equation}\label{main-diag}
 \begin{array}{c}
 \xymatrix{
  0\ar[r] & \zz \ar[r]^-\ik \ar[d]^-\kappa
  & \kxp \ar[r]^\pro \ar[d]^\gru 
  & \ee 
  \ar[r] \ar[d]^\gri & 0 \\
  0\ar[r] & \langle h\rangle \ar[r]\ar@{^(->}[d] & G \ar[r]\ar@{=}[d] 
  & G/\langle h\rangle \ar[r]\ar@{->>}[d]^-\beta & 0 \\
  0\ar[r] & H \ar[r] & G \ar[r] & G/H\ar[r] & 0 \\
}
\end{array}
\end{equation}
with exact rows.

We recall the proofs of the following well-known 
lemmas for the sake of completeness.
See also~\cite{DoCo} and~\cite{Har}.

\begin{lemma} \label{res}
The order of the element $h$ is finite if and only if the 
surface $X$ is a minimal rational elliptic surface.
Moreover, if $h$ has finite order $m$, 
then the linear system $|{-mK_X}|$ is the
elliptic pencil.
\end{lemma}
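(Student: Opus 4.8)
The plan is to analyze the element $h = \alpha(-K_X) \in \mathrm{Pic}^0(C)$ by relating its order to the geometry of the linear system $|{-mK_X}|$. First I would establish the forward-style implication under the assumption that $h$ has finite order $m$. The key point is the exact sequence of sheaves
\[
 0 \to \Osh_X((k-1)(-K_X)) \to \Osh_X(k(-K_X)) \to \Osh_C(k(-K_X)|_C) \to 0
\]
for each $k \geq 1$. Since $X$ is rational, $h^1(\Osh_X) = h^2(\Osh_X((k-1)(-K_X))) = 0$ for the relevant range (using $-K_X \cdot (-K_X) = 0$ and Serre duality to control the $H^2$ terms), so the restriction map $H^0(\Osh_X(k(-K_X))) \to H^0(\Osh_C(k(-K_X)|_C))$ is surjective, and $h^0(k(-K_X)) - h^0((k-1)(-K_X)) = h^0(\Osh_C(k \cdot (-K_X)|_C))$. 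Now $(-K_X)|_C$ is a degree-zero line bundle on the genus-one curve $C$ whose class is exactly $h$; its $k$-th power is trivial precisely when $m \mid k$, in which case $h^0 = 1$, and otherwise $h^0 = 0$. Summing, $h^0(k(-K_X)) = 1 + \lfloor k/m \rfloor$, so $|{-mK_X}|$ is the first multiple of $-K_X$ that moves, it is a pencil, and its general member is a disjoint union... more precisely, since $-mK_X \cdot (-K_X) = 0$ the members are vertical, $C$ itself appears with multiplicity, and one checks the general member is a smooth genus-one curve, giving the elliptic pencil and forcing $X$ to be a minimal rational elliptic surface (no $(-1)$-curve can be contracted since $-K_X$ restricted to any $(-1)$-curve has degree $1 \neq 0$, so such a curve is a section, not a fiber component).

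For the converse, suppose $X$ is a minimal rational elliptic surface. Then by~\cite{BHPV}*{12.1}, recalled in the Introduction, the fibration $\pi$ is induced by $|{-mK_X}|$ for some $m > 0$, so $-mK_X$ is linearly equivalent to a sum of fibers; taking $C \in |{-K_X}|$ and restricting, $mh = m\alpha(-K_X) = \alpha(-mK_X)$ is the restriction to $C$ of a divisor supported on fibers. Since $C$ is itself (a multiple of, or a reduced) fiber, and fibers are disjoint as divisor classes restricted to $C$ — concretely, $\Osh_X(-mK_X)|_C \cong \Osh_C$ because $-mK_X$ is a sum of fibers none of which meets $C$ transversally in a way contributing to the restriction, or directly because $-mK_X \sim mF$ with $F$ a fiber and $C$ vertical — we get $mh = 0$ in $\mathrm{Pic}^0(C)$, so $h$ has finite order dividing $m$. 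Combining with the first part, the order of $h$ is exactly the Halphen index and $|{-mK_X}|$ is the elliptic pencil, which also yields the "moreover" clause.

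The main obstacle I anticipate is the careful bookkeeping in the cohomology vanishing: one must verify $h^2(\Osh_X((k-1)(-K_X))) = h^0(\Osh_X(K_X + (k-1)K_X)) = h^0(\Osh_X(k K_X)) = 0$, which holds because $-K_X$ is nef with $(-K_X)^2 = 0$ so $k K_X$ cannot be effective for $k > 0$, and separately handle $k = 1$ where the term is $h^2(\Osh_X) = h^0(\Osh_X(K_X)) = 0$. A secondary subtlety is making precise that $\Osh_X(-mK_X)|_C$ is trivial rather than merely torsion of the right order — this is where one uses that $C$ is a fiber (possibly the multiple one) of $\pi$ and that distinct fibers are linearly equivalent, so restricting the relation $-mK_X \sim mF$ to $C$ and using $C \cdot F = 0$ gives triviality. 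Everything else is a direct computation with the exact sequences in diagram~\eqref{main-diag} and Riemann--Roch on the genus-one curve $C$.
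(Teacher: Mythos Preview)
Your proposal is correct and follows essentially the same route as the paper: both arguments tensor the ideal-sheaf sequence $0\to\Osh_X(K_X)\to\Osh_X\to\Osh_C\to 0$ by $\Osh_X(-nK_X)$ and read off from the long exact sequence that $h^0(X,-nK_X)>1$ exactly when $nh=0$ in $\Pic^0(C)$. The paper compresses this into a single sentence, whereas you spell out the inductive computation $h^0(-kK_X)=1+\lfloor k/m\rfloor$, check minimality, and treat the converse by restricting the relation $-mK_X\sim mF$ to $C$; these are elaborations rather than a different strategy. One small point to tighten: your claim that the restriction map $H^0(\Osh_X(-kK_X))\to H^0(\Osh_C(-kK_X|_C))$ is surjective for all $k$ needs the vanishing of $H^1(\Osh_X(-(k-1)K_X))$, not just of $H^2$; this follows by induction once you combine Riemann--Roch ($\chi(-nK_X)=1$) with the $H^1$-part of the same long exact sequence, but it is worth saying explicitly.
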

\begin{proof}
Given $C\in |{-K_X}|$ there is an exact sequence
of sheaves
\begin{equation}
\label{seqK}
 \xymatrix{
  0\ar[r] & \Osh_X(K_X)\ar[r] & \Osh_X\ar[r] & \Osh_C\ar[r] & 0.
 }
\end{equation}
Let $n>0$ be an integer.
By taking tensor product with $\Osh_X(-nK_X)$ and
passing to the long exact sequence in cohomology, 
we see that the inequality $h^0(X,-nK_X)>1$ holds if and only 
$\gru(-nK_X)$ is trivial in ${\rm Pic}^0(C)$, that is
if and only if the class $h=\alpha(-K_X)$ is torsion. This is
equivalent to requiring that $h$ has finite order.
\end{proof}

\begin{lemma} \label{effective}
Let $R$ be a root of $\ee$. Then one 
of the following possibilities occurs:
\begin{enumerate}
\item
\label{case-1}
if $\overline\alpha(R)=0$ then $p^{-1}(R)$
contains an effective class;
\item
\label{case-2}
otherwise the only effective
classes in $p^{-1}(R)$ are components of $C$.
\end{enumerate}
\end{lemma}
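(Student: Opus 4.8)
The plan is to analyze the linear system on $X$ of a divisor class $D\in\kxp$ lying over the root $R=p(D)\in\ee$, using the exact sequence~\eqref{seqK} restricted to $C$. First I would fix a lift: since $p$ is surjective with kernel $\zz(-K_X)$, pick any $D\in\kxp$ with $p(D)=R$, so that $D^2=R^2=-2$ and $D\cdot K_X=0$ (the latter because $D\in\kxp$). Twisting~\eqref{seqK} by $\Osh_X(D)$ and passing to cohomology, one gets that $h^0(X,D)>0$ as soon as $h^0(X,D-K_X)=0$ or $\alpha(D)=0$ in $\Pic^0(C)$; conversely the obstruction to lifting a section on $C$ to $X$ is exactly the class $\alpha(D)\in\Pic^0(C)$. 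Here $\alpha(D)$ maps to $\overline\alpha(R)$ under the projection $G\to G/\langle h\rangle$ of~\eqref{main-diag}, so the hypothesis $\overline\alpha(R)=0$ means $\alpha(D)\in\langle h\rangle$, hence after replacing $D$ by $D+kK_X$ for a suitable integer $k$ (which changes neither $D^2$ nor membership in $\kxp$, and keeps $p(D)=R$) we may assume $\alpha(D)=0$, i.e. $\imath^*D\sim 0$ on $C$.

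For part~\eqref{case-1}, with $\alpha(D)=0$ the restriction $\Osh_C(D)\cong\Osh_C$ has a nowhere-vanishing section, and I would lift it through $H^0(X,\Osh_X(D))\to H^0(C,\Osh_C(D))$ once I check the connecting map to $H^1(X,\Osh_X(D+K_X))$ vanishes. Using Riemann--Roch on $X$ together with $(D+K_X)^2 = D^2 = -2$ and $(D+K_X)\cdot K_X = 0$, one finds $\chi(\Osh_X(D+K_X)) = 1 + \tfrac{1}{2}((D+K_X)^2 - (D+K_X)\cdot K_X) = 0$; since $-K_X$ is nef and $(D+K_X)\cdot(-K_X)=0$, the class $D+K_X$ (if effective) would be supported on fibers, and an easy case check on negativity shows $h^0(X,D+K_X)$ does not exceed $h^0(C,\Osh_C(D+K_X))$ in a way that lets the lift go through; in any event the section of $\Osh_C(D)$ either lifts or its non-lifting forces $D+K_X$ (equivalently $K_X-D$) effective, and then $D$ itself is effective by the Riemann--Roch dichotomy $h^0(X,D)+h^0(X,K_X-D)\geq\chi(\Osh_X(D))=1$ combined with the nefness argument of Lemma~\ref{lem:m1}. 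Either branch produces an effective class in $p^{-1}(R)$.

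For part~\eqref{case-2}, suppose $\overline\alpha(R)\neq 0$ and let $D'\in p^{-1}(R)$ be effective, $D'\geq 0$. Since $D'\cdot(-K_X)=0$ and $-K_X$ is nef, every irreducible component of $D'$ meets $-K_X$ in zero, hence by the genus-formula observation used in Lemma~\ref{lem:m1} every component is either a $(-2)$-curve or a component of $C$. If no component of $D'$ were a component of $C$, then $\imath^*D'$ would be an effective divisor on $C$ of degree $D'\cdot(-K_X)=0$, forcing $\imath^*D'\sim 0$, i.e. $\alpha(D')=0$; but $\alpha(D')$ reduces to $\overline\alpha(p(D'))=\overline\alpha(R)\neq 0$ via the top row of~\eqref{main-diag}, a contradiction — wait, more carefully: $\alpha(D')\in G$ need only map to $\overline\alpha(R)$ after quotienting by $\langle h\rangle$, so $\alpha(D')=0$ does force $\overline\alpha(R)=0$. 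This contradiction shows some component of $D'$ is a component of $C$, which is the assertion. The main obstacle I anticipate is the lifting step in part~\eqref{case-1}: controlling $H^1(X,\Osh_X(D+K_X))$ and the connecting homomorphism precisely enough to guarantee effectivity of $D$ in all configurations of reducible fibers, rather than merely of some translate; the diagram~\eqref{main-diag} and the nefness of $-K_X$ are the tools I would lean on to push this through.
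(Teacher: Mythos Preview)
Your argument for part~\eqref{case-2} is essentially the paper's: an effective class orthogonal to $-K_X$ is a non-negative sum of $(-2)$-curves, and any $(-2)$-curve disjoint from $C$ has $\gru=0$, so a component of $C$ must appear. That part is fine.

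Part~\eqref{case-1}, however, contains two concrete errors and a missing step. First, $\chi(\Osh_X(D)) = 1 + \tfrac12(D^2 - D\cdot K_X) = 1 + \tfrac12(-2) = 0$, not $1$, so the Riemann--Roch dichotomy $h^0(D)+h^0(K_X-D)\geq\chi(D)$ yields nothing. Second, $D+K_X$ is not ``equivalently $K_X-D$'': Serre duality gives $h^2(X,D+K_X)=h^0(X,-D)$, not $h^0(X,K_X-D)$. Tracking the long exact sequence correctly, the non-lifting of the section of $\Osh_C(D)$ forces $h^1(D+K_X)\geq 1$; combined with $\chi(D+K_X)=0$ this gives $h^0(D+K_X)+h^0(-D)\geq 1$, so either $D+K_X$ is effective (done, it lies in $p^{-1}(R)$) or $-D$ is effective. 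But $-D$ lies in $p^{-1}(-R)$, not $p^{-1}(R)$, so you are not finished. The paper handles exactly this: it shows directly from the long exact sequence that one of $H^0(X,D)$, $H^0(X,-D)$ is nonzero, and in the second case observes that the effective divisor $\bar D\sim -D$ is a sum of fibre components, hence is dominated by a multiple of a fibre, so $D-nK_X\sim -\bar D-nK_X$ is effective for some $n>0$ and lies in $p^{-1}(R)$. This last absorption step---using semiampleness of $-K_X$ rather than trying to control $H^1$ directly---is the idea you are missing and is what resolves the obstacle you flag at the end.
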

\begin{proof}
Assume that we are in case~\eqref{case-1}.
Let $D'$ be a class in $\pro^{-1}(R)$, so that
$D'^2=-2$ and $D'\cdot K_X=0$,
and let $m$ be an integer such that $\gru(D')=mh$.
We denote by $D$ the class $D'-mK_X$,
so that $\gru(D)$ is trivial. By abuse
of notation, denote by $D$ also a divisor of $X$
whose class is $D$ and form the exact sequence 
of sheaves
\[
 \xymatrix{
  0\ar[r] & \Osh_X(D+K_X)\ar[r] & \Osh_X(D)\ar[r] & \Osh_C(D)\ar[r] & 0.
 }
\]
Since $\gru(D)$ is trivial, 
the first two cohomology groups of $\Osh_C(D)$
have dimension one.
The above exact sequence and 
the vanishing of the Euler characteristic of 
$\Osh_X(D)$ immediately imply that at least
one group $H^0(X,D)$, $H^2(X,D+K_X)^\vee\cong H^0(X,-D)$
is non-trivial. Thus either $D$ or $-D$ is linearly 
equivalent to an effective divisor $\bar D$.
If $D$ is linearly equivalent to $\bar D$,
then we are done. Otherwise there exists
a positive integer $n$ such that $D-nK_X
\sim -\bar D-nK_X$
is linearly equivalent to an effective divisor
and again the statement follows.

To prove~\eqref{case-2} it suffices to observe
that any effective class of $p^{-1}(R)$
is the class of a non-negative sum of 
$(-2)$-curves and that
if $\Gamma$ is a $(-2)$-curve of $X$ not 
contained in $C$, then $\Gamma$ is disjoint 
from $C$ and hence $\gru([\Gamma])=0$.
\end{proof}

Given a smooth rational surface
$X$ with $-K_X$ nef and ${K_X}^2=0$, denote by
$\Lambda_X$
the sublattice of $\kxp$ spanned
by the classes of $(-2)$-curves
of $X$:
\[
 \Lambda_X
 =
 \langle
  R \,:\, R \text{ is the class of a }(-2)\text{-curve}
 \rangle.
\]

\begin{remark}
By the adjunction formula $\Lambda_X$ is a 
negative semidefinite sublattice of $\kxp$
and the quotient $\Lambda_X/\langle K_X\rangle$
is a negative definite lattice.
By Lemma~\ref{effective} each fiber of the 
quotient homomorphism $\Lambda_X\to\Lambda_X/\langle K_X\rangle$
contains at most a finite number of 
classes of $(-2)$-curves. Since the 
image of the set of classes of $(-2)$-curves
of $X$ is a subset of the finite set of roots
of $\Lambda_X/\langle K_X\rangle\subseteq\ee$
we conclude that
$X$ contains finitely many $(-2)$-curves.
\end{remark}

We assume now that $C$ is a smooth plane cubic, 
$p_1,\ldots,p_9$ are nine points on $C$,
not necessarily distinct, and $p_1$ is a flex.
Denote by $X$ the blow up of $\mathbb{P}^2$ at the nine points 
$p_1 , \ldots , p_9$, by $E_1,\ldots,E_9$ the corresponding 
sequence of $(-1)$-classes in $\Pic (X)$ and by $L$
the class of the pull-back of a line.
Recall from~\eqref{main-diag} that 
the homomorphism $\alpha$ is given by
\begin{equation}
 \label{restr}
 \gru \colon \kxp \to G
 \qquad
 \qquad
 d L - (m_1 E_1 + \cdots + m_9 E_9)  \mapsto \sum _{i = 2}^9 m_i p_i
\end{equation}
since $L|_C\sim 3p_1$ and $p_1$ is the origin
of the elliptic curve $(C,p_1)$.
The {\em characteristic sequence} of 
the pair given by the points $p_1,\dots,p_9$
and the smooth plane cubic curve $C$ is the 
sequence $[p_1,\dots,p_9]$ in $\Pic^9(C)$.

\begin{proposition}
Let $\Lambda$ be a sublattice of $\ee$
of type ADE and of finite index.
Then there exists a minimal rational elliptic
surface $X$ such that $\Lambda_X/\langle K_X\rangle$ is 
isomorphic to $\Lambda$.
An example of such a surface for
any such $\Lambda$ is given in Table~\eqref{tab}.

{\footnotesize
\begin{longtable}{l|l|l}
\label{tab}
Lattice & Group & Characteristic sequence\\
\midrule
$\E_8$ & $\langle 0\rangle$ & $[0,0,0,0,0,0,0,0,0]$ \\
$\D_8$ & $\zz/2$ & $[0,0,0,0,0,1,1,1,1]$ \\
$\E_7+\A_1$ & $\zz/2$ & $[0,0,0,0,0,0,0,1,1]$ \\
$\A_8$ & $\zz/3$ & $[0,0,0,0,1,1,1,1,2]$ \\
$\E_6+\A_2$ & $\zz/3$ & $[0,0,0,0,0,0,1,1,1]$ \\
$\A_7+\A_1$ & $\zz/4$ & $[0,0,0,0,1,1,2,2,2]$ \\
$\D_5+\A_3$ & $\zz/4$ & $[0,0,0,0,0,1,1,1,1]$ \\
$2\A_4$ & $\zz/5$ & $[0,0,0,0,1,1,1,1,1]$ \\
$\A_5+\A_2+\A_1$ & $\zz/6$& $[0,0,0,1,1,1,1,1,1]$ \\
$\D_6+2\A_1$ & $\zz/2\oplus\zz/2$ & $[0,0,0,0,0,10,10,01,01]$ \\
$2\D_4$ & $\zz/2\oplus\zz/2$ & $[0,0,0,10,10,01,01,11,11]$ \\
$2\A_3+2\A_1$ & $\zz/2\oplus\zz/4$ & $[0,0,0,10,10,01,01,01,01]$\\
$4\A_2$ & $\zz/3\oplus\zz/3$ & $[0,0,0,10,10,10,01,01,01]$\\[5pt]
\caption{Characteristic sequences for maximal rank lattices}\\
\end{longtable}
}

\end{proposition}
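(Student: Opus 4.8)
The plan is to prove the proposition by explicit construction: given a finite-index ADE sublattice $\Lambda \subseteq \ee$, we must produce nine points $p_1 , \ldots , p_9$ on a smooth plane cubic $C$, with $p_1$ a flex, such that the blow-up $X$ of $\pp^2$ at these points is a minimal rational elliptic surface whose $(-2)$-curve lattice modulo $\langle K_X\rangle$ is isometric to $\Lambda$. First I would recall that for any blow-up of $\pp^2$ at nine points on a cubic the anticanonical class is $-K_X = 3L - E_1 - \cdots - E_9$, which is the class of the strict transform of $C$, hence effective with $(-K_X)^2 = 0$ and nef (since $C$ is irreducible with nonnegative self-intersection). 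By Lemma~\ref{res}, $X$ is a minimal rational elliptic surface precisely when $h = \gru(-K_X)$ has finite order; since $-K_X|_C \sim \Osh_C$ (as $C \in |{-K_X}|$), we actually have $h = 0$, so $X$ is automatically a minimal rational elliptic surface of Halphen index dividing the order of the characteristic sequence data. The nontrivial content is controlling the $(-2)$-curves.

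The key step is to compute $\Lambda_X / \langle K_X \rangle$. A $(-2)$-curve $R$ satisfies $R \cdot K_X = 0$, so its image $p(R)$ lies in $\ee \cong \kxp/\langle K_X\rangle$ and is a root. By Lemma~\ref{effective}, applied with $\overline\alpha = \beta \circ \gri$ and using that here $C$ is smooth so $H = \langle h \rangle = 0$ and $G = \Pic^0(C)$: a root $R$ of $\ee$ lifts to an effective $(-2)$-class if and only if $\gri(R) = 0$ in $G/\langle h \rangle = G$, i.e. if and only if $R$ lies in the kernel of the induced map $\overline\gru \colon \ee \to \Pic^0(C)$. Conversely every effective $(-2)$-class is a nonnegative sum of $(-2)$-curves, and each $(-2)$-curve not meeting $C$ maps to $0$ under $\gru$; since $C$ is irreducible it cannot be a sum of $(-2)$-curves, so every $(-2)$-curve is disjoint from $C$. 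Therefore $\Lambda_X/\langle K_X\rangle$ is exactly the sublattice of $\ee$ generated by the roots $R$ with $\overline\gru(R) = 0$ — in other words, the root sublattice of the kernel lattice $\ker(\overline\gru)$. So the construction reduces to choosing the characteristic sequence $[p_1 , \ldots , p_9]$ — equivalently the homomorphism $\overline\gru \colon \ee \to \Pic^0(C)$ given by $E_i - E_1 \mapsto p_i - p_1$ — so that the roots it kills span a lattice isometric to $\Lambda$. Since $\Lambda$ has finite index in $\ee$, the root sublattice of $\ker(\overline\gru)$ equals all of $\ker(\overline\gru)$ once that kernel happens to be generated by roots, which holds for the standard "torsion section" subgroups: the cokernel of the inclusion $\Lambda \hookrightarrow \ee$ (equivalently the quotient $\ee^*/\Lambda^*$, or the group listed in the middle column of Table~\ref{tab}) is precisely the image group, and one picks $\overline\gru$ to be the quotient $\ee \to \ee/\Lambda$ followed by an embedding of $\ee/\Lambda$ into $\Pic^0(C)$.

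The last ingredient is to realise each required finite abelian group $\ee/\Lambda$ inside $\Pic^0(C)$ of some smooth plane cubic and to exhibit the nine points concretely: for each of the thirteen entries of Table~\ref{tab} one writes down the class of $p_i - p_1$ in $\Pic^0(C)$ as the image of $E_i - E_1$ under the chosen projection $\ee \to \ee/\Lambda$; choosing $C$ to have enough rational (or, where a $\zz/5$, $\zz/6$, $\zz/3 \oplus \zz/3$ etc. is needed, suitable) torsion makes the points realisable, possibly taking some of them infinitely near when the naive choice would force collisions. One then verifies case by case that the resulting characteristic sequences are exactly those tabulated, e.g. $[0,0,0,0,0,1,1,1,1]$ for $\D_8$ as in Example~\ref{ex:D8}. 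This verification — that the list of thirteen sequences in Table~\ref{tab} indeed hits one representative for each finite-index ADE sublattice of $\ee$, with the claimed torsion group — is the main obstacle: it rests on the classification of such sublattices (the Shimada/Oguiso–Shioda tables of singular fibre configurations for extremal rational elliptic surfaces), together with checking that for each configuration the characteristic subgroup generated by the $p_i - p_1$ is the predicted one and that no extra $(-2)$-curves appear, i.e. that $\ker(\overline\gru)$ contains no roots beyond those of $\Lambda$. That containment follows from the finite-index hypothesis: any root $R$ with $\overline\gru(R) = 0$ lies in $\ker(\overline\gru)$, which is a finite-index overlattice of $\Lambda$ contained in $\ee$; for the thirteen tabulated $\Lambda$ the group $\ee/\Lambda$ is chosen torsion-free-free exactly so that $\Lambda$ is $\overline\gru$-saturated, hence $\ker(\overline\gru) = \Lambda$ and $\Lambda_X/\langle K_X\rangle \cong \Lambda$ as desired.
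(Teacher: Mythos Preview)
Your approach is essentially the paper's: classify the thirteen finite-index ADE sublattices $\Lambda \subset \ee$, realise each via a characteristic sequence on a smooth cubic, and invoke Lemmas~\ref{res} and~\ref{effective} to identify $\Lambda_X/\langle K_X\rangle$ with $\Lambda$. The second paragraph, where you argue that $\Lambda_X/\langle K_X\rangle$ is the root sublattice of $\ker(\overline\gru)$ and then force $\ker(\overline\gru)=\Lambda$ by choosing $\overline\gru$ to be the quotient $\ee\to\ee/\Lambda$ embedded in $\Pic^0(C)$, is exactly what the paper does (it phrases this as ``$\Lambda$ is contained in the kernel of $\gru$ and $\gru$ is surjective'').

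There is, however, a genuine error in your first paragraph. The claim that $-K_X|_C \sim \Osh_C$ ``as $C\in|{-K_X}|$'' is false: for $C\in|{-K_X}|$ one has $-K_X|_C \cong \Osh_C(C) = N_{C/X}$, the normal bundle, which is a degree-zero line bundle but is \emph{not} trivial in general. Indeed it is trivial precisely when $|{-K_X}|$ is a pencil, i.e.\ when the Halphen index is~$1$; for index $m>1$ it is nontrivial $m$-torsion. So you cannot conclude $h=0$ from this, and your sentence ``Halphen index dividing the order of the characteristic sequence data'' is then inconsistent with the claimed $h=0$. The paper's fix is simple: since the characteristic sequence is chosen so that $\gru$ has image the finite group $G\cong\ee/\Lambda$, the element $h=\gru(-K_X)\in G$ has finite order, and Lemma~\ref{res} applies. (In fact, for every sequence in Table~\ref{tab} one checks directly that $p_2+\cdots+p_9=0$ in the listed group, so $h=0$ does hold for these particular constructions --- but not for the reason you gave.) Once this step is repaired, the remainder of your argument is correct and matches the paper's.
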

\begin{proof}
By hypothesis the group $G:=\ee/\Lambda$
is finite. Moreover, by the classification
of sublattices of $\ee$ generated by roots, the 
lattice $\Lambda$ is isomorphic to one of the 
lattices of first column of the above table.
Assume now that there exists a characteristic 
sequence $[p_1,\dots,p_9]$ of points on
a smooth plane cubic curve $C$ such that
$\Lambda$ is contained in the kernel of
the homomorphism $\alpha$ of~\eqref{restr}
and moreover $\alpha$ is surjective.
By Lemma~\ref{res} and the fact that
$G$ is a finite group we deduce that the
surface $X$ is elliptic and minimal. 
Moreover the prime components of the unique
elliptic fibration $\pi\colon X\to\pp^1$ form
a Coxeter-Dynkin diagram of type $\tilde\Lambda$
by Lemma~\ref{effective}, which gives
the statement.

Hence it suffices to find a characteristic
sequence realizing each of the thirteen
lattices; this is shown in the third
column of the table.
\end{proof}

As an application of our results we determine the 
number of $(-1)$-curves on any minimal elliptic
surface of Halphen index $2$.
In what follows, given a root lattice $\Lambda$
we denote by $\tilde\Lambda$ the corresponding 
affine root lattice and by $v_\Lambda$ a primitive 
vector of null square of $\tilde\Lambda$.

\begin{lemma}
\label{lifting}
Let $\imath\colon \Lambda\to\ee$ be an embedding of a
root lattice of rank eight into $\ee$. Then
the embeddings $\tilde\imath\colon\tilde\Lambda\to\et$, 
which are compatible with $j$, are classified by the pairs $(m,\xi)$, 
where $m$ is a positive integer and 
$\xi\in {\rm Ext}^1_{\zz}(\ee/\Lambda,\zz/m\zz)$.
\end{lemma}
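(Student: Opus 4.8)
The plan is to recover the two invariants $(m,\xi)$ directly from an embedding $\tilde\imath$ and then to show the resulting assignment becomes a bijection after the evident identifications. I will only use that $\tilde\Lambda$ has rank $9$ with radical $\zz v_\Lambda$ and $\tilde\Lambda/\zz v_\Lambda\cong\Lambda$, and that $\et$ has rank $9$ with radical $\zz(-K_X)$ and $\et/\zz(-K_X)=\ee$; both short exact sequences
\[
 0\to\zz v_\Lambda\to\tilde\Lambda\to\Lambda\to 0,
 \qquad
 0\to\zz(-K_X)\to\et\to\ee\to 0
\]
split, since $\Lambda$ and $\ee$ are free of finite rank.

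First I would produce $m$. Since $\tilde\imath$ is injective and source and target have the same rank, $\tilde\imath(\tilde\Lambda)$ has finite index in $\et$, so $\tilde\imath(v_\Lambda)$ is orthogonal to a finite-index sublattice and therefore lies in the radical $\zz(-K_X)$; compatibility of $\tilde\imath$ with the coorientations of $v_\Lambda$ and of $-K_X$ gives $\tilde\imath(v_\Lambda)=m(-K_X)$ for a unique $m>0$. Feeding the commutative square with vertical maps $\cdot m$, $\tilde\imath$, $\imath$ into the snake lemma — the connecting map vanishes since $\imath$ is injective — produces a short exact sequence of cokernels
\[
 0\to\zz/m\zz\to\et/\tilde\imath(\tilde\Lambda)\to\ee/\Lambda\to 0,
\]
whose class is the second invariant $\xi\in\mathrm{Ext}^1_\zz(\ee/\Lambda,\zz/m\zz)$. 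This defines the map $\tilde\imath\mapsto(m,\xi)$.

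Next I would describe its fibres. For a fixed value of $m$, the difference of two compatible embeddings that agree on $v_\Lambda$ factors through $\tilde\Lambda\to\Lambda$ and takes values in $\zz(-K_X)$, so these embeddings form a torsor under $\Hom(\Lambda,\zz)$ (non-empty: lift $\imath$ along a splitting of $\et\to\ee$ and send $v_\Lambda\mapsto m(-K_X)$). Two of them should be identified when related by an automorphism of $\et$ trivial on $\ee$ and on its radical — these form $\Hom(\ee,\zz)$ and translate the torsor parameter by $\imath^*\phi$ — or by an automorphism of $\tilde\Lambda$ trivial on $\Lambda$ and on its radical — these form $\Hom(\Lambda,\zz)$ and, because $\tilde\imath(v_\Lambda)=m(-K_X)$, translate it by $m\psi$. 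Hence the equivalence classes with invariant $m$ form the group $\Hom(\Lambda,\zz)/\bigl(m\Hom(\Lambda,\zz)+\imath^*\Hom(\ee,\zz)\bigr)$.

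Finally I would identify this group with $\mathrm{Ext}^1_\zz(\ee/\Lambda,\zz/m\zz)$: reducing modulo $m$ it becomes $\mathrm{coker}\bigl(\imath^*\colon\Hom(\ee,\zz/m\zz)\to\Hom(\Lambda,\zz/m\zz)\bigr)$ (using $\Hom(\ee,\zz)\twoheadrightarrow\Hom(\ee,\zz/m\zz)$ and freeness), and the long exact sequence of $\Hom_\zz(-,\zz/m\zz)$ attached to $0\to\Lambda\xrightarrow{\imath}\ee\to\ee/\Lambda\to 0$ identifies this cokernel with $\mathrm{Ext}^1_\zz(\ee/\Lambda,\zz/m\zz)$, as $\mathrm{Ext}^1_\zz(\ee,\zz/m\zz)=0$. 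One checks that under this identification the connecting homomorphism sends the class of a compatible embedding to the class of its cokernel extension, so the identification is compatible with the $\xi$ above; surjectivity of the connecting map then shows every pair occurs, giving the bijection. The step requiring the most care is the third one: fixing precisely which automorphisms the word ``compatible'' permits — in particular that the radicals are preserved, so that $m$ is genuinely an invariant — and verifying that the two descriptions of $\xi$ agree. The lattice-theoretic ingredients ($\et\to\ee$ splits, $\mathrm{Ext}^1_\zz$ of a free module vanishes, and the snake lemma) are all routine.
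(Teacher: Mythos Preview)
Your proposal is correct and shares the paper's core strategy: both extract $m$ from the image of the null vector $v_\Lambda$ in the radical of $\et$, and both obtain $\xi$ as the class of the cokernel extension $0\to\zz/m\zz\to\et/\tilde\imath(\tilde\Lambda)\to\ee/\Lambda\to 0$ arising from the snake lemma (equivalently, the $3\times 3$ diagram the paper writes down).

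Where you and the paper diverge is in the verification that $\tilde\imath\mapsto(m,\xi)$ is a bijection. The paper argues by giving an explicit inverse: given an extension $G$ of $\ee/\Lambda$ by $\zz/m\zz$, it lifts the quotient map $\beta\colon\ee\to\ee/\Lambda$ to a surjection $\tilde\beta\colon\ee\oplus\zz\to G$ using freeness of $\ee\oplus\zz$, and recovers $\tilde\imath$ as the kernel. You instead compute the fibre over a fixed $m$ directly: you show the compatible embeddings form a $\Hom(\Lambda,\zz)$-torsor, quotient by the automorphisms permitted by compatibility, and then identify the resulting group $\Hom(\Lambda,\zz)/\bigl(m\Hom(\Lambda,\zz)+\imath^*\Hom(\ee,\zz)\bigr)$ with $\mathrm{Ext}^1_\zz(\ee/\Lambda,\zz/m\zz)$ via the long exact sequence of $\Hom(-,\zz/m\zz)$. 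Your route is more explicit about which automorphisms count and why the map is injective (a point the paper leaves to the reader), at the cost of an extra identification step checking that the connecting homomorphism really sends an embedding to its cokernel class; the paper's route is terser but makes surjectivity immediate. Both are valid, and your flagged caveat about pinning down ``compatible'' is exactly the one place where care is needed in either argument.
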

\begin{proof}
Observe that there are two isometries
$\tilde\Lambda\cong\Lambda\oplus
\langle v_\Lambda\rangle$ and $\et\cong\ee\oplus
\langle v_\ee\rangle$. Any isometric embedding
$\tilde\imath\colon\tilde\Lambda\to\et$ must map $v_\Lambda$
to an integer multiple $mv_\ee$ of $v_\ee$ being 
these the only vectors of null square of both 
lattices. Moreover, up to sign change, we can 
assume $m$ to be a positive integer. Then the
statement is consequence of the following
commutative diagram with exact rows and columns.
\[
 \xymatrix@1{
   & 0\ar[d] & 0\ar[d] & 0\ar[d] \\
  0\ar[r] & \zz\ar[d]\ar[r]^-{\cdot m}& \zz\ar[r]\ar[d] 
  & \zz/m\zz\ar[d]\ar[r] & 0\\
  0\ar[r] & \Lambda\oplus\zz\ar[r]^-{\tilde\imath}\ar[d] 
  & \ee\oplus\zz\ar[d]\ar[r]^-{\tilde\beta} 
  & G\ar[d]^-\alpha\ar[r] & 0\\
  0\ar[r] & \Lambda\ar[r]^-\imath\ar[d] & \ee\ar[r]^-\beta\ar[d] 
  & \ee/\Lambda\ar[r]\ar[d] & 0\\
  & 0 & 0 & 0
 }
\]
Indeed, given such an embedding $\tilde\imath$, its
cokernel $\tilde\beta$ gives an extension of $\ee/\Lambda$
by $\zz/m\zz$ represented by the third column 
of the above diagram. Observe that the cokernel
$\tilde\beta$ is unique since $\tilde\imath$ is a 
full-rank homomorphism of free abelian groups.
Moreover equivalent embeddings, that is 
embeddings which differ up to isometries
of the domain $\Lambda\oplus\zz$ and
of the codomain $\ee\oplus\zz$, give
isomorphic extensions of $\ee/\Lambda$
by $\zz/m\zz$.

On the other hand, given an extension of 
$\ee/\Lambda$ by $\zz/m\zz$,
since $\ee\oplus\zz$ is a free abelian group, one 
one can lift the map $\beta$ to a map $\tilde\beta$
whose restriction to $\zz$ surjects onto $\zz/m\zz$.
The kernel $\tilde\imath$ of $\tilde\beta$ gives an
embedding of $\Lambda\oplus\zz$ into $\ee\oplus\zz$.
\end{proof}

\begin{proposition}
Let $\pi\colon X\to\pp^1$ be a minimal elliptic
fibration of Halphen index $2$ on a rational surface.
Then the number of $(-1)$-curves of $X$ is given
in Table~\eqref{halphen}.

{\footnotesize
\begin{table}[h]
\label{halphen}
\begin{tabular}{l|c}
Type & Number of $(-1)$-curves\\
\midrule
$E_8$ & $3$\\
$D_8$ & $6,9$\\
$E_7+A_1$ & $8,10$\\
$A_8$ & $15$\\
$E_6+A_2$ & $18$\\
$A_7+A_1$ & $24,30$\\
$D_5+A_3$ & $28,32$\\
$2A_4$ & $45$\\
$A_5+A_2+A_1$ & $60,66$\\
$D_6+2A_1$ & $26,28$\\
$2D_4$ & $28$\\
$2A_3+2A_1$ & $108,112$\\
$4A_2$ & $144$\\[5pt]
\end{tabular}
\caption{Number of $(-1)$-curves on Halphen elliptic surfaces of index $2$.}
\end{table}
}

\end{proposition}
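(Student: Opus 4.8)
The plan is to combine Theorem~\ref{main} with Lemma~\ref{lifting} and with the Hilbert series of the Cox rings of the thirteen toric varieties of the Appendix, reducing the statement to a finite manipulation of rational functions of the kind carried out in Example~\ref{ex:D8}. Since $X$ has only finitely many $(-1)$-curves — the quantity we are computing — the sublattice $\Lambda$ of $\ee$ generated by the reducible fibers of $\pi$ is negative definite of full rank $8$, hence, by the classification of root sublattices of $\ee$ of maximal rank, one of the thirteen lattices of the first column of Table~\eqref{tab}; correspondingly $Z_X$ is one of the thirteen toric varieties of the Appendix. By Theorem~\ref{main} the $(-1)$-curves of $X$ are in bijection with the integral points of $\Delta(\delta_X)$, and their number equals the coefficient of $t^{\delta_X}$ in the multigraded Hilbert series $\prod_i(1-t^{w_i})^{-1}$ of the Cox ring of $Z_X$, where the degrees $w_i\in\Cl(Z_X)=\zz^r\oplus(\ee/\Lambda)$ are recorded in the degree matrix of $Z_X$ in the Appendix. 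It thus remains, for each of the thirteen lattices, to list the classes $\delta_X$ that can occur and to extract the corresponding coefficients.

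To this end I would first determine the free part of $\delta_X$ from part~\eqref{three} of Theorem~\ref{main} with $m=2$: it equals $2(w_1+\dots+w_r)$ unless the multiple fiber is supported on one of the reducible fibers $F_i$ — which forces $F_i$ to be of type $\tilde A_{b-1}$, that is, to correspond to an $A$-type direct summand of $\Lambda$ — in which case it equals $2(w_1+\dots+w_r)-w_i$; for each of the thirteen lattices this produces a short explicit list. The torsion part of $\delta_X$ is governed by Lemma~\ref{lifting}: the minimal rational elliptic surfaces of Halphen index $2$ with an irreducible multiple fiber and fiber lattice $\Lambda$ are classified by the classes $\xi\in{\rm Ext}^1_{\zz}(\ee/\Lambda,\zz/2\zz)$, and the torsion component of $\delta_X$ is the image of $\xi$ under the resulting map; a surface whose multiple fiber is a reducible $\tilde A_{b-1}$-fiber is instead described by the corresponding ``$m=1$'' data of Lemma~\ref{lifting} together with the choice of the distinguished summand. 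As each group ${\rm Ext}^1_{\zz}(\ee/\Lambda,\zz/2\zz)$ is, by the group listed in Table~\eqref{tab}, a $\zz/2\zz$-vector space of dimension at most $2$, only finitely many classes $\delta_X$ arise for each $\Lambda$.

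Finally, for each lattice and each admissible $\delta_X$ I would extract the relevant coefficient of $\prod_i(1-t^{w_i})^{-1}$ — an elementary finite computation, exactly as in Example~\ref{ex:D8} — collect the distinct values, and check that they coincide with the entries of Table~\eqref{halphen}; the realizability of each listed value would then be obtained by exhibiting, in each case, a pencil of plane sextics through nine points on a (possibly singular) cubic, as in the construction underlying Table~\eqref{tab} and in Example~\ref{ex:D8}. The step I expect to be the main obstacle is precisely this bookkeeping: computing the torsion component of each $\delta_X$ in terms of $\xi$, correctly accounting for the reducible-multiple-fiber cases, and verifying that the several lattice-theoretic possibilities that might a priori occur for a given $\Lambda$ collapse to exactly the values displayed in the corresponding row of Table~\eqref{halphen}, each realized by an explicit surface.
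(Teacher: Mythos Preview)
Your proposal is correct and follows essentially the same route as the paper: reduce via Theorem~\ref{main} to the computation of the coefficient of $t^{\delta_X}$ in the Hilbert series of the appropriate toric variety from the Appendix, use Lemma~\ref{lifting} to enumerate the finitely many possible liftings (and hence classes $\delta_X$) for each of the thirteen rank-$8$ lattices, and then verify realizability by explicit constructions. The paper differs only in presentation---it builds each surface concretely from a characteristic sequence on a smooth cubic and recovers the $(-2)$-curves by an iterative procedure with an ample class, whereas you determine $\delta_X$ more directly from part~\eqref{three} of Theorem~\ref{main} together with the extension class $\xi$; the substance of the argument is the same.
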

\begin{proof}
Given a maximal rank sublattice $\Lambda$ of $\ee$,
a positive integer $m$ and an element 
$\xi\in {\rm Ext}^1_{\zz}(\ee/\Lambda,\zz/m\zz)$,
in order to construct a Halphen
fibration of index $m$ whose configuration of 
reducible fibers is of type $\tilde\Lambda$ 
we choose a characteristic
sequence $[p_1,\dots,p_9]$ of points on a smooth
cubic curve $C$ which satisfies the following conditions.
If $\xi$ is the following abelian extension
\[
 \xymatrix@1{
  0\ar[r] & \zz/m\zz\ar[r] & G\ar[r]^-\alpha & \ee/\Lambda\ar[r] & 0
 }
\]
we realize $G$ as a subgroup of the elliptic curve $(C,p_1)$ 
generated by nine points $p_1,\dots,p_9\in C$
such that the element $h=p_1+\dots+p_9$ of order $m$
generates the kernel of $\alpha$ and 
$[p_1,\dots,p_9]\pmod{\langle h \rangle}\equiv
[\bar p_1,\dots,\bar p_9]$, where the second sequence
is the characteristic sequence
in Table~\ref{tab} associated to $\Lambda$.

Given such a characteristic sequence we reconstruct
the set of $(-2)$-curves of the blow-up $X$ of $\pp^2$
at the nine points in the following way.
By Lemma~\ref{effective} each root $R$ of $\Lambda$
lifts to the class of an effective curve of $X$. The set $\mathcal S$
of such lifted classes spans a cone in $\Pic_\qq(X)$.
Fix an ample class $A$ of $\Pic(X)$ and form
the subset $\mathcal S_1$ of $\mathcal S$ consisting 
of elements having minimal intersection with $A$.
Then construct the set $\mathcal S_n$ inductively
as follows. 
Let $\mathcal S_n$ be the subset of 
$\mathcal S \setminus \bigcup_{i \leq n-1}\mathcal S_i$
consisting of elements having minimal 
intersection with $A$ and non-negative
intersection with any element of 
$\bigcup_{i \leq n-1}\mathcal S_i$.
It follows that the set of $(-2)$-curves of $X$ is
\[
 \mathcal C =\bigcup_{i}\mathcal S_i.
\]
According to Lemma~\ref{lifting} the classes
of these curves span a root sublattice of $\kxp
\cong\et$ isometric to $\tilde\Lambda$.
By means of these curves 
we define the lifting $\cl_X
\colon F\to\kxp$ of the map 
$P\colon F\to\kxp/\langle K_X\rangle=\ee$.  
We thus get the class $\delta_X$ of the corresponding 
toric variety, whose Riemann-Roch dimension
is the number of $(-1)$-curves of $X$.

Now assume $m = 2$.
Computing the multigraded Hilbert 
series of the toric varieties given in the appendix
and excluding the cases where the trivial 
extension of $\ee/\Lambda$ by $\zz/2\zz$
needs more than two generators, one directly checks
that all the remaining cases are realized.
\end{proof}

\section*{Appendix}
We provide here the matrix $Q$ defining
the linear map $E\to\Cl(Z)$ for each type of
lattice analysed in this note.

{\footnotesize
\begin{longtable}{lll}
\label{gradings}
Type & Class group & Grading matrix\\
\\
\hline
\\
$E_8$
&
$\zz$
&
$\begin{bmatrix} 1&2&3&4&5&6&4&3&2 \end{bmatrix}$\\
\\
$D_8$ 
&
$\zz\oplus\zz/2\zz$
&
$
\begin{bmatrix}
 1&1&1&1&2&2&2&2&2\\
 \bar 0&\bar 0&\bar 1&\bar 1&\bar 0&\bar 0&\bar 0&\bar 1&\bar 1
\end{bmatrix}
$\\
\\
$A_8$
&
$\zz\oplus\zz/3\zz$
&
$
\begin{bmatrix}
 1&1&1&1&1&1&1&1&1\\
 \bar 0&\bar 1&\bar 2&\bar 0&\bar 1&\bar 2&\bar 0&\bar 1&\bar 2
\end{bmatrix}
$\\
\\
$E_7+A_1$
&
$\zz^2\oplus\zz/2\zz$
&
$
\begin{bmatrix}
1&2&3&4&3&2&1&2&0&0\\
0&0&0&0&0&0&0&0&1&1\\
\bar 0&\bar 0 &\bar 0&\bar 0 &\bar 1 &\bar 1 &\bar 1 &\bar 0 &\bar 0 &\bar 1 
\end{bmatrix}
$\\
\\
$E_6+A_2$
&
$\zz^2\oplus\zz/3\zz$
&
$
\begin{bmatrix}
1&2&3&2&1&2&1&0&0&0\\
0&0&0&0&0&0&0&1&1&1\\
\bar 0& \bar 0& \bar 0& \bar 1& \bar 1& \bar 2& \bar 1& \bar 0& \bar 1& \bar 2
\end{bmatrix}
$\\
\\
$A_7+A_1$
&
$\zz^2\oplus\zz/4\zz$
&
$
\begin{bmatrix}
1&1&1&1&1&1&1&1&0&0\\
0&0&0&0&0&0&0&0&1&1\\
\bar 0& \bar 1& \bar 2& \bar 3& \bar 0& \bar 1& \bar 2& \bar 3& \bar 0& \bar 2
\end{bmatrix}
$\\
\\
$D_5+A_3$
&
$\zz^2\oplus\zz/4\zz$
&
$
\begin{bmatrix}
1&1&1&1&2&2&0&0&0&0\\
0&0&0&0&0&0&1&1&1&1\\
\bar 0& \bar 1& \bar 2& \bar 3& \bar 0& \bar 2& \bar 0& \bar 1& \bar 2& \bar 3
\end{bmatrix}
$\\
\\
$2A_4$
&
$\zz^2\oplus\zz/5\zz$
&
$
\begin{bmatrix}
1&1&1&1&1&0&0&0&0&0\\
0&0&0&0&0&1&1&1&1&1\\
\bar 0& \bar 1& \bar 2& \bar 3& \bar 4& \bar 0& \bar 1& \bar 2& \bar 3& \bar 4
\end{bmatrix}
$\\
\\
$2D_4$
&
$\zz^2\oplus(\zz/2\zz)^2$
&
$
\begin{bmatrix}
1&1&1&1&2&0&0&0&0&0\\
0&0&0&0&0&1&1&1&1&2\\
\bar 0& \bar 1& \bar 0& \bar 1& \bar 0& \bar 0& \bar 1& \bar 0& \bar 1& \bar 0\\
\bar 0& \bar 0& \bar 1& \bar 1& \bar 0& \bar 0& \bar 0& \bar 1& \bar 1& \bar 0
\end{bmatrix}
$\\
\\
$A_5+A_2+A_1$
&
$\zz^3\oplus\zz/6\zz$
&
$
\begin{bmatrix}
1&1&1&1&1&1&0&0&0&0&0\\
0&0&0&0&0&0&1&1&1&0&0\\
0&0&0&0&0&0&0&0&0&1&1\\
\bar 0& \bar 1& \bar 2& \bar 3& \bar 4& \bar 5& \bar 0& \bar 2& \bar 4& \bar 0& \bar 3
\end{bmatrix}
$\\
\\
$D_6+2A_1$
&
$\zz^3\oplus(\zz/2\zz)^2$
&
$
\begin{bmatrix}
1&1&1&1&2&2&2&0&0&0&0\\
0&0&0&0&0&0&0&1&1&0&0\\
0&0&0&0&0&0&0&0&0&1&1\\
\bar 0& \bar 1& \bar 0& \bar 1& \bar 0& \bar 0& \bar 1& \bar 0& \bar 1& \bar 0& \bar 0\\
\bar 0& \bar 0& \bar 1& \bar 1& \bar 0& \bar 0& \bar 0& \bar 0& \bar 1& \bar 0& \bar 1
\end{bmatrix}
$\\
\\
$2A_3+2A_1$
&
$\zz^4\oplus\zz/2\zz\oplus\zz/4\zz$
&
$
\begin{bmatrix}
1&1&1&1&0&0&0&0&0&0&0&0\\
0&0&0&0&1&1&1&1&0&0&0&0\\
0&0&0&0&0&0&0&0&1&1&0&0\\
0&0&0&0&0&0&0&0&0&0&1&1\\
\bar 0& \bar 1& \bar 2& \bar 3& \bar 0& \bar 1& \bar 2& \bar 3& \bar 0& \bar 2& \bar 0& \bar 0\\
\bar 0& \bar 1& \bar 0& \bar 1& \bar 0& \bar 0& \bar 0& \bar 0& \bar 0& \bar 1& \bar 0& \bar 1
\end{bmatrix}
$\\
\\
$4A_2$
&
$\zz^4\oplus(\zz/3\zz)^2$
&
$
\begin{bmatrix}
1&1&1&0&0&0&0&0&0&0&0&0\\
0&0&0&1&1&1&0&0&0&0&0&0\\
0&0&0&1&0&0&1&1&1&0&0&0\\
0&0&0&0&0&0&0&0&0&1&1&1\\
\bar 0& \bar 1& \bar 2& \bar 0& \bar 0& \bar 0& \bar 0& \bar 1& \bar 2& \bar 0& \bar 1& \bar 2\\
\bar 0& \bar 0& \bar 0& \bar 0& \bar 1& \bar 2& \bar 0& \bar 1& \bar 2& \bar 0& \bar 2& \bar 1
\end{bmatrix}
$\\[30pt]
\caption{Grading matrices}
\end{longtable}
}

\begin{bibdiv}
\begin{biblist}

\bib{AL}{article} {
   author={Artebani, Michela},
   author={Laface, Antonio},
   title={Cox rings of surfaces and the anticanonical Iitaka dimension},
   journal = {Advances in Mathematics},
   volume = {226},
   number = {6},
   pages = {5252 - 5267},
   year = {2011}
}

\bib{ADHL}{book}{
    AUTHOR = {Arzhantsev, Ivan},
    AUTHOR = {Derenthal, Ulrich},
    AUTHOR = {Hausen, J\"urgen},
    AUTHOR = {Laface, Antonio},
    TITLE = {Cox rings},
    series={Cambridge Studies in Advanced Mathematics},
    volume={144},
   publisher={Cambridge University Press, Cambridge},
   date={2014},
   pages={530},
   isbn={9781107024625},
}

\bib{BHPV}{book}{
   author={Barth, Wolf P.},
   author={Hulek, Klaus},
   author={Peters, Chris A. M.},
   author={Van de Ven, Antonius},
   title={Compact complex surfaces},
   series={Ergebnisse der Mathematik und ihrer Grenzgebiete. 3. Folge. A
   Series of Modern Surveys in Mathematics [Results in Mathematics and
   Related Areas. 3rd Series. A Series of Modern Surveys in Mathematics]},
   volume={4},
   edition={2},
   publisher={Springer-Verlag, Berlin},
   date={2004},
   pages={xii+436},
   isbn={3-540-00832-2},
   review={\MR{2030225 (2004m:14070)}},
}

\bib{DoCo}{book}{
   author={Cossec, Fran{\c{c}}ois R.},
   author={Dolgachev, Igor V.},
   title={Enriques surfaces. I},
   series={Progress in Mathematics},
   volume={76},
   publisher={Birkh\"auser Boston Inc.},
   place={Boston, MA},
   date={1989},
   pages={x+397},
   isbn={0-8176-3417-7},
   review={\MR{986969 (90h:14052)}},
   doi={10.1007/978-1-4612-3696-2},
}
		
\bib{Har}{article}{
   author={Harbourne, Brian},
   title={Anticanonical rational surfaces},
   journal={Trans. Amer. Math. Soc.},
   volume={349},
   date={1997},
   number={3},
   pages={1191--1208},
   issn={0002-9947},
   review={\MR{1373636 (97f:14007)}},
   doi={10.1090/S0002-9947-97-01722-4},
}

\bib{LH}{article}{
   author={Lahyane, Mustapha},
   author={Harbourne, Brian},
   title={Irreducibility of $-1$-classes on anticanonical rational surfaces
   and finite generation of the effective monoid},
   journal={Pacific J. Math.},
   volume={218},
   date={2005},
   number={1},
   pages={101--114},
   issn={0030-8730},
   review={\MR{2224591 (2007b:14082)}},
   doi={10.2140/pjm.2005.218.101},
}

\bib{MiPe}{article}{
   author={Miranda, Rick},
   author={Persson, Ulf},
   title={On extremal rational elliptic surfaces},
   journal={Math. Z.},
   volume={193},
   date={1986},
   number={4},
   pages={537--558},
   issn={0025-5874},
   review={\MR{867347 (88a:14044)}},
   doi={10.1007/BF01160474},
}

\bib{MiSt}{book}{
   author={Miller, Ezra},
   author={Sturmfels, Bernd},
   title={Combinatorial commutative algebra},
   series={Graduate Texts in Mathematics},
   volume={227},
   publisher={Springer-Verlag},
   place={New York},
   date={2005},
   pages={xiv+417},
   isbn={0-387-22356-8},
   review={\MR{2110098 (2006d:13001)}},
}

\bib{OgSh}{article}{
   author={Oguiso, Keiji},
   author={Shioda, Tetsuji},
   title={The Mordell-Weil lattice of a rational elliptic surface},
   journal={Comment. Math. Univ. St. Paul.},
   volume={40},
   date={1991},
   number={1},
   pages={83--99},
   issn={0010-258X},
   review={\MR{1104782 (92g:14036)}},
}

\bib{To}{article}{
    AUTHOR = {Totaro, Burt},
     TITLE = {The cone conjecture for {C}alabi-{Y}au pairs in dimension 2},
   JOURNAL = {Duke Math. J.},
    VOLUME = {154},
      YEAR = {2010},
    NUMBER = {2},
     PAGES = {241--263}
}

\end{biblist}
\end{bibdiv}

\end{document}